\newcommand{\Depth}{2}
\newcommand{\Height}{2}
\newcommand{\Width}{2}
\newcounter{todocounter}
\DeclareFontFamily{OT1}{rsfs}{}
\DeclareFontShape{OT1}{rsfs}{n}{it}{<-> rsfs10}{}
\DeclareMathAlphabet{\mathscr}{OT1}{rsfs}{n}{it}
\newcommand{\RR}{\mathbb R}
\newcommand{\CC}{\mathbb C}
\newcommand{\kk}{\Bbbk}
\newcommand{\sth}{\;\middle|\;}
\newtheorem{thm}{Theorem}[section]
\newtheorem*{thm*}{Theorem}
\newtheorem*{cor*}{Corollary}
\newtheorem{prop}[thm]{Proposition}
\theoremstyle{definition}
\newtheorem{defn}[thm]{Definition}
\theoremstyle{remark}
\newtheorem{rmk}[thm]{Remark}
\newtheorem{eg}[thm]{Example}
\newcommand{\done}{\hfill $\triangleleft$}
\title{The geometry of sloppiness}
\author[Dufresne]{Emilie Dufresne}
\address{School of Mathematical Sciences, University of Nottingham, University Park, Nottingham, NG7 2RD}
\email{emilie.dufresne@nottingham.ac.uk}
\author[Harrington]{Heather A Harrington}
\address{Mathematical Institute, University of Oxford, Andrew Wiles Building, Radcliffe Observatory Quarter, Woodstock Road, Oxford, OX2 6GG}
\email{harrington@maths.ox.ac.uk}
\author[Raman]{Dhruva V Raman}
\address{Engineering Department, University of Cambridge, Trumpington Street, Cambridge, CB2 1PZ}
\email{dhruva.raman@eng.cam.ac.uk }
\begin{document}

\date{\today}
%


\begin{abstract}
The use of mathematical models in the sciences often involves the estimation of unknown parameter values from data. Sloppiness provides information about the uncertainty of this task. In this paper, we develop a precise mathematical foundation for sloppiness and define rigorously its key concepts, such as `model manifold', in relation to concepts of structural identifiability. We redefine sloppiness conceptually as a comparison between the premetric on parameter space induced by measurement noise  and a reference metric. This opens up the possibility of alternative quantification of sloppiness, beyond the standard use of the Fisher Information Matrix, which assumes that parameter space is equipped with the usual Euclidean metric and the measurement error is infinitesimal. Applications include parametric statistical models, explicit time dependent models, and ordinary differential equation models.
\end{abstract}

\maketitle


\section{Introduction}\label{Section-Introduction}

Mathematical models describing physical, biological, and other real-life phenomena contain parameters whose values must be estimated from data.  Over the past decade, a powerful framework called ``sloppiness'' has been developed that relies on Information Geometry \cite{Amari2007} to study the uncertainty in this procedure \cite{Brown2003,Daniels2008,Transtrum:2010ci,Transtrum:2011de,Transtrum:2014hr,Transtrum:2015hm}. Although the idea of using the Fisher Information to quantify uncertainty is not new (see for example \cite{Fisher1925, rao1945information}), the study of sloppiness gives rise to a particular observation about the uncertainty of the procedure and has potential implications beyond parameter estimation. Specifically, sloppiness has enabled advances in the field of systems biology, drawing connections to sensitivity \cite{gutenkunst:pcb:2007,Erguler:2011bu,sloppycell}, experimental design \cite{Apgar:2010di, Mannakee2016,gutenkunst:pcb:2007}, identifiability \cite{ar-jk-mps-mj-jt:comparisonIdentifiability,Transtrum:2015hm,Chis2016:SloppinessIdentifiability}, robustness \cite{Daniels2008}, and reverse engineering \cite{Erguler:2011bu,Clermont:2015}.  Sethna, Transtrum and co-authors identified sloppiness as a universal property of highly parameterised mathematical models \cite{Waterfall2006,Transtrum:2010ci,Tonsing2014,gutenkunst:pcb:2007}.   However, the precise interpretation of sloppiness remains a matter of active discussion in the literature \cite{Apgar:2010di,keh-trm-rwa:nonlinIdent,mf-asm-ak:bootstrap}. 

This paper's main contribution is to provide a unified mathematical framework for sloppiness rooted in algebra and geometry. We extend the concept beyond time dependent models, in particular, to statistical models. We rigorously define the concepts and building blocks for the theory of sloppiness. Our approach requires techniques from many fields including algebra, geometry, and statistics.
We illustrate each new concept with a simple concrete example. The new mathematical foundation we provide for sloppiness is not limited by current computational tools and opens up the way to further work.

Our general setup is a mathematical model $M$ that describes the behavior of a variable $x\in\RR^m$ depending on a parameter $p\in P\subseteq \RR^r$. Our first step is to explain how each precise choice of perfect data $z$ induces an equivalence relation $\sim\!\!_{M,z}$ on the parameter space:  two parameters are equivalent if they produce the same perfect data. We then characterize the various concepts of structural identifiability in terms of the equivalence relation $\sim\!\!_{M,z}$. Roughly speaking, structural identifiability asks to what extent perfect data determines the value of the parameters. See section \ref{Section-EquivalenceRelation}.

Assume that the perfect data $z$ is a point of $\RR^N$ for some $N$. The second crucial step needed in order to define sloppiness is a map $\phi$  from parameter space $P$ to data space $\RR^N$ giving the perfect data as a function $\phi(p)$ of the parameters known as a ``model manifold" in the literature \cite{Transtrum:2010ci,Transtrum:2011de,Transtrum:2014hr,Transtrum:2015hm}, which we rename as a model prediction map.  A model prediction map thus induces an injective function on the set of equivalence classes (the set-theoretic quotient $P/{\sim\!\!_{M,z}}$), that is, the equivalence classes can be separated by $N$ functions $P\to \RR$. See Section \ref{Section-ModelPredictions}.

The next step is to assume that the mathematical model describes the phenomenon we are studying perfectly, but that the ``real data'' is corrupted by measurement error and the use of finite sample size. That is, we assume that noisy data arises from a random process whose probability distribution then induces a premetric $d$ on the parameter space, via the Kullback-Leibler divergence ( see start of Section \ref{Section-Sloppiness} ). This premetric $d$ quantifies the proximity between the two parameters in parameter space via the discrepancy between the probability distributions of the noisy data associated to the two parameters. 

The aforementioned premetric $d$ has a tractable approximation in the limit of decreasing measurement noise using the Fisher Information Matrix (FIM). In the standard definition, a model is ``sloppy'' when the condition number of the FIM is large, that is, there are several orders of magnitude between its largest and smallest eigenvalues. Multiscale sloppiness (see \cite{dvr-ja-ap:SloppinessDhruva}) extends this concept to regimes of non-infinitesimal noise. 

We conceptually extend the notion of sloppiness to a comparison between the premetric $d$ and a reference metric on parameter space. We demonstrate that using the condition number of the FIM to measure sloppiness at a parameter $p_0$, as is done in most of the sloppiness literature \cite{Brown2003,Daniels2008,Transtrum:2010ci,Transtrum:2011de,Transtrum:2014hr,Transtrum:2015hm}, corresponds to comparing an approximation of $d$ in an infinitesimal neighborhood of $p_0$ to the standard Euclidean metric on $\RR^r\supset P$. Note that considering the entire spectrum of the FIM, as is done newer work in the sloppiness literature (eg, \cite{Waterfall2006}) corresponds to performing a more refined comparison between an approximation of $d$ in an infinitesimal neighborhood of $p_0$ to the standard Euclidean metric on $\RR^r\supset P$. Multiscale sloppiness, which we extend here beyond its original definition \cite{dvr-ja-ap:SloppinessDhruva}  for Euclidean parameter space and Gaussian measurement noise, avoids approximating $d$, and so better reflects the sloppiness of models beyond the infinitesimal scale. Finally, we describe the intimate relationship between sloppiness and practical identifiability, that is, whether generic noisy data allows for bounded confidence regions when performing maximum likelihood estimation. See Section \ref{Section-Sloppiness}.

The following diagram illustrates the main objects discussed in this paper:\\

\includegraphics[width=\textwidth]{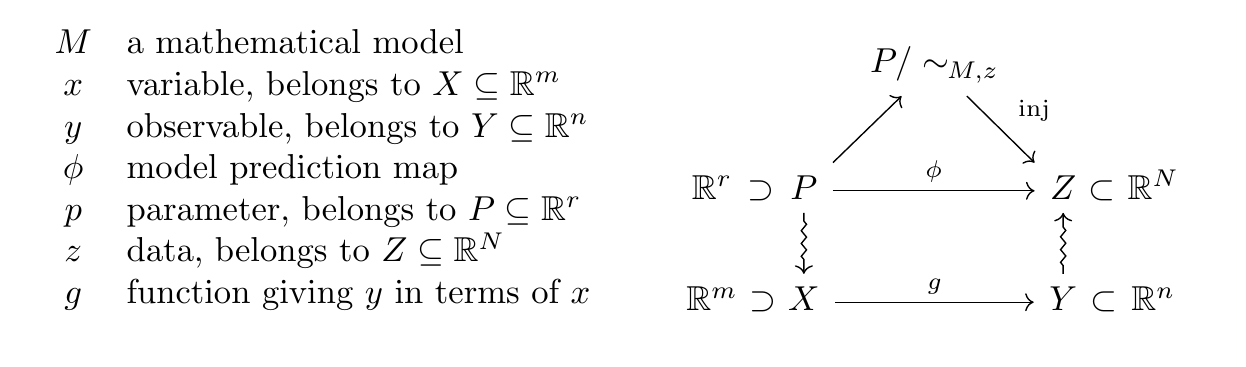}


\section{An equivalence relation on parameter space}\label{Section-EquivalenceRelation}

A mathematical model $M$ describes the behavior of a variable $x\in X\subseteq \RR^m$ depending on a parameter $p\in P\subset \RR^r$,  with measurable output $y=g(x)\in Y\in \RR^n$. We further specify a  choice of perfect data $z$ produced for the parameter value $p$. The nature of perfect data will be made clear in the examples discussed throughout the section. We think of the perfect data $z$ as belonging to the wider \emph{data space} $Z$ that encompasses all possible ``real'' data. Data space will be defined rigorously  in Section \ref{Section-Sloppiness} when measurement noise comes into play.

An example where the measurable output $y$ differs from $x=(x_1,\ldots,x_n)$ is when only some of the $x_i$'s can be measured (e.g., due to cost or inaccessibility of certain variables). The perfect data is extracted from the measurable output, as illustrated by examples \ref{eg-2BiasedCoins1}, \ref{eg-Line1}, and \ref{eg-ODESoln1}.
The behavior of the variable $x$ may also vary in time (and position in space, although this will not be addressed here). In the time dependent case, the perfect data often consists of values of the measurable output $y$ at finitely many timepoints, that is, a \emph{time series}.  An alternative choice of perfect data would be the set of all stable steady states. We are also interested in what we will call the \emph{continuous data}, that is, the value of $y$ at all possible timepoints or, equivalently, the function $t\mapsto y(t)$ for $t$ belonging to the full time interval. For a statistical model, the measurable output is the outcome from one instance of a statistical experiment, while a natural choice for perfect data is a probability distribution belonging to the model, or any function or set of functions characterising this probability distribution. 

Given a model $M$, a choice of perfect data $z$ induces a \emph{model-data equivalence relation $\sim\!\!_{M,z}$} on the parameter space $P$ as follows: two parameters $p$ and $p'$ are equivalent ($p\sim\!\!_{M,z}p'$) if and only if fixing the parameter value to $p$ or $p'$  produces the same perfect data. We now provide a more concrete description for a selection of types of mathematical models.


\subsection{Finite discrete statistical models}

The most straightforward case is when the perfect data is described explicitly as a function of the parameter $p$. Finite discrete statistical models fall within this group, with the perfect data $z$ being the probability distribution of the possible outcomes depending on the choice of parameter.  Such a model is described by a map
\vspace{-0.2cm}
\begin{align*}
\rho\colon P &\to  [0,1]^n\\
                     p &\mapsto (\rho_1(p),\ldots,\rho_n(p)).
\end{align*}
The model-data equivalence relation then coincides with the equivalence relation $\sim\!\!_\rho$ induced on $P$ by the map $\rho$, that is, $p\sim\!\!_{M,z} p' \text{ if and only if } \rho(p)=\rho(p')$.

\begin{eg}[Two biased coins \cite{sh-ak-bs:sle}]\label{eg-2BiasedCoins1}
A person with two biased coins, picks one at random, tosses it and records the result. The person then repeats this three additional times, for a total of four coin tosses. The parameter is $(p_1,p_2,p_3)\in[0,1]^3$, where $p_1$ is the probability of picking the first coin, $p_2$ is the probability of obtaining heads when tossing the first coin (that is, the bias of the first coin), and $p_3$ is the probability of obtaining heads when tossing the second coin. Here, the measurable output is the record of a single instance of the statistical experiment described and perfect data is the probability distribution of the possible outcomes (there are five possibilities). The map giving the model is then
\vspace{-0.2cm}
\begin{alignat*}{3}
\rho &\colon & [0,1]^3 &\to&& \RR^5\\
                & & (p_1,p_2,p_3) &\mapsto&& (\rho_0, \rho_1, \rho_2, \rho_3, \rho_4),\end{alignat*}                 
where $\rho_i$ is the probability of obtaining heads $i$ times. Explicitly we have
\begin{align*}
\rho_0 &= p_1(1-p_2)^4+(1-p_1)(1-p_3)^4,\\
\rho_1 &=  4p_1p_2(1-p_2)^3+4(1-p_1)p_3(1-p_3)^3,\\
\rho_2 &=  6p_1p_2^2(1-p_2)^2+6(1-p_1)p_3^2(1-p_3)^2,\\
\rho_3 &=  4p_1p_2^3(1-p_2)+4(1-p_1)p_3^3(1-p_3),\\
\rho_4 &=  p_1p_2^4+(1-p_1)p_3^4.
\end{align*}
Two parameters $(p_1,p_2,p_3)$ and $(p'_1,p'_2,p'_3)$ are then  equivalent if  $\rho(p_1,p_2,p_3)=\rho(p'_1,p'_2,p'_3)$, or equivalently, if $\rho_i(p_1,p_2,p_3)=\rho_i(p'_1,p'_2,p'_3)$ for each $i$.

We next study the equivalence classes. As we cannot distinguish between the two coins, we will always have $(p_1,p_2,p_3)\sim\!\!_{M,z} (1-p_1,p_3,p_2)$, and so the equivalence class of $(p_1,p_2,p_3)$ contains the set $\{(p_1,p_2, p_3),(1-p_1,p_3,p_2)\}$.
 Furthermore, the equivalence class of $(p_1,p_2,p_2)$ will contain $\{(q_1,p_2,p_2) \mid q_1\in [0,1]\}$. The equivalence class of $(0,p_2,p_3)$ will contain $\{(0,q_1,p_3) \mid q_1\in [0,1]\}$ and $\{(1,p_2,q_2) \mid q_2\in [0,1]\}$.

The ideal $(\rho_i\otimes 1-1\otimes \rho_i \mid i=0,\ldots ,4)$ in $\CC[p_1,p_2,p_3]\otimes \CC[p_1,p_2,p_3]$ is the ideal cutting out the set-theoretic equivalence relation $\sim\!\!_\rho$ on $\CC^3$ induced by extending the function $\rho$ to $\CC^3$. Indeed, the zero set of this ideal is the set of pairs $((p_1,p_2,p_3),(p_1',p_2',p_3'))\in \CC^3\times \CC^3$ such that  $(p_1,p_2,p_3)\sim\!\!_\rho (p_1',p_2',p_3')$. Using a symbolic computation software, we compute the prime decomposition of its radical and conclude that the equivalence class of $(p_1,p_2,p_3)\in\CC^3$ is
\begin{align*}
&\{(p_1,p_2,p_3),(1-p_1,p_3,p_2)\}     & \text{ if }p_1\neq 0,1,1/2~p_2\neq p_3,\\
&\{(q,p_2,p_2) \mid q\in \CC\}              & \text{ if } p_1\neq 0,1,1/2~p_2=p_3,\\
&\{(0, q_1, p_3) \mid q_1\in\CC\}\cup \{(1,p_2,q_2) \mid q_2\in\CC\} & \text{ if } p_1=0,1,\\
&\{(1/2,p_2,p_3)\}                                                     & \text{ if }p_1=1/2.
\end{align*}
Therefore, the equivalence classes in $[0,1]^3$ must be contained in the intersections of the above sets with $[0,1]^3$. Thus the equivalence class of $(p_1,p_2,p_3)\in [0,1]^3$ is
\begin{align*}
&\{(p_1,p_2,p_3),(1-p_1,p_3,p_2)\}     & \text{ if }p_1\neq 0,1,1/2~p_2\neq p_3,\\
&\{(q,p_2,p_2) \mid q\in [0,1]\}              & \text{ if } p_1\neq 0,1,1/2~p_2=p_3,\\
&\{(0, q, p_3) \mid q_1\in [0,1] \}\cup \{(1,p_2,q) \mid q_2\in [0,1]\} & \text{ if } p_1=0,1,\\
&\{(1/2,p_2,p_3)\}                                                     & \text{ if }p_1=1/2.
\end{align*}
In particular, we obtain a stratification of parameter space as shown in Fig.~\ref{fig:coin}. 

\begin{figure}[ht]
\centering
\begin{tikzpicture}
\coordinate (O) at (0,0,0);
\coordinate (A) at (0,\Width,0);
\coordinate (B) at (0,\Width,\Height);
\coordinate (C) at (0,0,\Height);
\coordinate (D) at (\Depth,0,0);
\coordinate (E) at (\Depth,\Width,0);
\coordinate (F) at (\Depth,\Width,\Height);
\coordinate (G) at (\Depth,0,\Height);
\coordinate (H) at (\Depth /2,-.2,\Height);
\coordinate (I) at (2.6,.20,\Height);
\coordinate (J) at (3,1.5 ,\Height);
\coordinate (K) at (\Depth,\Width,\Height);
\coordinate (L) at (\Depth/2,0,0);
\coordinate (M) at (\Depth/2,\Width,0);
\coordinate (N) at (\Depth/2,\Width,\Height);
\coordinate (P) at (\Depth /2,0,\Height);

\draw[gray,fill=white!30] (O) -- (C) -- (G) -- (D) -- cycle;
\draw[gray,fill=white!30] (O) -- (A) -- (E) -- (D) -- cycle;
\draw[blue,fill=blue!30,opacity=0.6] (C) -- (G) -- (E) -- (A) -- cycle;
\draw[green,fill=green!30,opacity=0.6] (O) -- (A) -- (B) -- (C) -- cycle ;
\draw[green,fill=green!30,opacity=0.6] (D) -- (E) -- (F) -- (G) -- cycle;
\draw[gray,fill=gray!30,opacity=0.6] (L) -- (M) -- (N) -- (P) -- cycle;
\draw[blue,fill=blue!30,opacity=0.2] (P) -- (M);

\draw[gray,fill=white!20,opacity=0.2] (A) -- (B) -- (F) -- (E) -- cycle;
\draw (H) node {$p_1$};
\draw (I) node {$p_2$};
\draw (J) node {$p_3$};
\draw (C) node [label={[xshift=-0.1cm, yshift=-0.5cm]0}]{};

\end{tikzpicture}
\caption{Stratification of parameter space for the two biased coins example. Blue: $\{(p_1,p_2,p_3)\mid p_1\neq 0,1,1/2~p_2=p_3\}$ Green: $\{(p_1,p_2,p_3)\mid p_1=0,1,1/2\}$, Grey: $\{(p_1,p_2,p_3)\mid p_1=1/2\}$ the rest of the cube (interior and faces) is the generic part $\{(p_1,p_2,p_3)\mid p_1\neq 0,1,~p_2\neq p_3\}$.}
\label{fig:coin}
\end{figure}
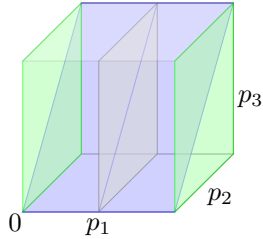
We remark that almost all equivalence classes have dimension zero, although some equivalence classes have dimension one. As the points with zero-dimensional equivalence classes form a dense open subset of parameter space, we say that the dimension of an equivalence class is generically zero. Note that  since all these zero-dimensional equivalence classes have size two, we say that the equivalence classes are generically of size two. \done\end{eg}


\subsection{time dependent models and the $2r+1$ result}

Let $M$ be an explicit time dependent model with measurable output $x$. That is, the behavior of the variable $x$ is given by the map
\begin{alignat*}{3} 
      \rho &\colon & P\times \RR_{\geq 0} &\to&& \RR^m  \\ 
                        && (p,t) &\mapsto && x(p,t),
\end{alignat*} 
and $x$ can be measured at any time $t$. Perfect time series data produced by the parameter $p$ will be $(x(p,t_1),\ldots,x(p,t_N))$, where $0\leq t_1<\cdots<t_N\in\RR_{\geq0}$ are timepoints. We denote the corresponding model-data equivalence relation on $P$ by $\sim\!\!_{M,t_1,\ldots,t_N}$. The continuous data is the map $\RR_{\geq 0}\to \RR^m$ given by $t\mapsto x(p,t)$. We denote the equivalence relation induced  by the continuous data on $P$ by $\sim_{M,\infty}$.

We particularly consider ODE systems with time series data. For such a model $M$, the behavior of the variable $x$ is described by a system of ordinary differential equations depending on the parameter $p\in P$ with some initial conditions:
\begin{alignat}{3}\label{eq-ODE1}
&\dot x &=& f(p,x)\\
&x(0)&=& x_0.\nonumber
\end{alignat}
When initial conditions are known or we do not wish to estimate them, they are not considered as components of the parameter. The measurable output is $y=g(x)$, and perfect data is then $(y(t_1),\ldots,y(t_N))\in\RR^{Nn}$ for $0\leq t_1<\cdots<t_N\in\RR_{\geq0}$. The continuous data is given by the function $\RR_{\geq0}\to\RR^n,~t\mapsto y(t)$, which supposes that a solution to the given ODE system exists, a valid assumption in the real-analytic case. 

The key result when working with time dependent models with time series data is the $2r+1$ result of Sontag \cite[Theorem 1]{Sontag:2rplus1}, which implies that there is a single ``global'' model-data equivalence relation: the equivalence relation $\sim_{M,\infty}$ induced by the continuous data. Precisely, we suppose that the model $M$ is real-analytic, that is, either an explicit time-dependent model given by a real-analytic map or an ODE system as in (\ref{eq-ODE1}) with $f$ a real-analytic function. We additionally assume that the variable $x$, the parameter $p$, and the time variable $t$  belong to real-analytic manifolds. If we suppose that $P$ is a real-analytic manifold of dimension $r$, then for $N\geq 2r+1$ and a generic choice of timepoints $t_1,\ldots, t_N$ the equivalence relation $\sim\!\!_{M,t_1,\ldots,t_N}$ coincides with the equivalence relation $\sim_{M,\infty}$. 

An important consequence of the $2r+1$ result  \cite{Sontag:2rplus1} is that for real-analytic time-dependent models with time series data, the model equivalence relation is a \emph{global} structural property of the model, and one need not specify which exact timepoints are used.

\begin{rmk}
Note that in many applications the variable $x$ belongs to the real positive orthant, which is indeed a real-analytic manifold. The condition on the time variable can be relaxed to include closed and partially closed time intervals. 
\end{rmk}

\begin{rmk}
A choice of $N$ timepoints corresponds to a choice of a point in the real analytic manifold $T:=\{(t_1,\ldots,t_N)\in \RR_{\geq 0} \mid t_i<t_{i+1}\}$. The use of the word ``generic'' in the statement means that there can be choices of $N$ timepoints that will not induce the equivalence relation $\sim\!\!_{M,\infty}$, but that these choices of timepoints will belong to a small subset of $T$, so small that its complement contains an open dense subset of $T$.
\end{rmk}

In cases where no results like the  $2r+1$ result \cite{Sontag:2rplus1}  hold, there is no ``global'' equivalence relation. Therefore, a finite number of measurements will never induce the same equivalence relation on parameter space as the continuous data. In other words, by taking more and more measurements we could obtain an increasingly fine equivalence relation without ever converging to $\sim\!\!_{M,\infty}$.

\begin{eg}[{A model for which the $2r+1$ result does  not hold, cf \cite[Section 2.3]{Sontag:2rplus1}}]
The model, while artificial, is an explicit time dependent model given by the map:
\begin{alignat*}{3}
\rho&\colon & \RR_{>0}\times\RR_{\geq 0} &\to&& \RR\\
                      && (p,t) &\mapsto&& \gamma(p-t),
\end{alignat*}
where $\gamma\colon\RR\to\RR$ is a $C^\infty$ map that is $e^{1/s}$ for $s<0$ and zero for $s\geq 0$. 
Suppose for a contradiction that evaluating at timepoints $t_1,\ldots,t_N$ induces the same equivalence relation on $\RR_{>0}$ as taking the perfect data to be the maps $t\mapsto \rho(p,t)$. Take $p_1>p_2\geq t_N$, it follows that $\rho(p_1,t_i)=0=\rho(p_2,t_i)$ for each $i=1,\ldots,N$. On the other hand, we will have $\rho(p_1,\nicefrac{p_1+p_2}{2})=0\neq\rho(p_2,\nicefrac{p_1+p_2}{2})$, and so we have a contradiction.\done\end{eg}

\begin{eg}[Fitting points to a line]\label{eg-Line1} This example is motivated by one of the examples found on the webpage of Sethna dedicated to sloppiness  \cite{js:FittingPolynomials}. We consider an explicit time dependent model where the variable $x$ changes linearly in time:
\[x(t)=a_0+a_1t,\]
that is, $x$ is given as a polynomial function in $t$ depending on the parameter $(a_0,a_1)\in \RR^2$. Hence by the $2r+1$ result \cite{Sontag:2rplus1}, taking the perfect data to be the measurement at $2\cdot 2+1=5$ sufficiently general time points induces the same equivalence relation as taking the perfect data as the continuous function $t\mapsto a_0+a_1t$. In fact, taking measurements at two timepoints will suffice, since there is exactly one line going through any two given points.

We have that $(a_0,a_1)\sim_{M,\infty} (b_0,b_1)$ if and only if
\[a_0+a_1t=b_0+b_1t,~\text{for all } t\in\RR_{\geq0}.\]

It follows that $a_0=b_0$ (taking $t=0$), and then $a_1=b_1$ (taking $t=1$), thus $[(a_o,a_1)]_{M,\infty}=\{(a_0,a_1)\}$. Naturally, this coincides with the equivalence classes obtained with taking the perfect data to be noiseless measurements at $t=0$ and $t=1$, that is, $(x(0),x(1))=(a_0,a_0+a_1)$.\done\end{eg}

\begin{eg}[Sum of exponentials]\label{eg-SumExponentials1}
The sum of exponentials model for exponential decay, widely studied in the sloppiness literature \cite{Transtrum:2010ci,Transtrum:2011de,Transtrum:2014hr}, is an explicit time dependent model given by the function
\begin{alignat*}{3}
\rho&\colon & \RR_{\geq 0}^2 \times \RR_{\geq 0}& \to &&\RR \\
                     & & (a,b,t) &\mapsto&& e^{-at}+e^{-bt}.
\end{alignat*}

By the $2r+1$ result \cite{Sontag:2rplus1}, the time series $(e^{-at_1}+e^{-bt_1},\ldots,e^{-at_5}+e^{-bt_5})$ with $(t_1,\ldots,t_5)$ generic induces the same equivalence relation on the parameter space $\RR_{\geq 0}^2$ as the continuous data. This model is clearly non-identifiable. Indeed, for any $a,b\in \RR_{\geq 0}$, the parameters $(a,b)$ and $(b,a)$ yield the same continuous data since $e^{-at}+e^{-bt}=e^{-bt}+e^{-at}$ for all $t$. It follows that the equivalence class of a parameter $(a,b)$ will contain the set $\{(a,b),(b,a)\}$.

Suppose $(a,b)\sim_{M,t_1,t_2}(a',b')$ where $t_1\neq t_2$ are positive real numbers, thus
\begin{align*}
e^{-at_1}+e^{-bt_1} &= e^{-a't_1}+e^{-b't_1},\\
e^{-at_2}+e^{-bt_2} &= e^{-a't_2}+e^{-b't_2}.
\end{align*}
We can reduce it to the case $t_1=1,~t_2=2$ by rescaling the time variable via the substitution $t\mapsto \nicefrac{(t+t_2-2t_1)}{(t_2-t_1)}$ in $\rho$. Simplifying further with the substitution $x=e^{-a},y=e^{-b},u=e^{-a'},v=e^{-b'}$, the equation becomes:
\vspace{-0.2cm}
\begin{align*}
x+y&=u+v\\
x^2+y^2&=u^2+v^2.
\end{align*}
It is then easy to see that the only solutions $(u,v)$ to this system are $(u,v)=(x,y)$ or $(u,v)=(y,x)$. As the exponential function is injective it follows that $(a',b')=(a,b)$ or $(a',b')=(b,a)$.

Therefore, the equivalence class of a parameter $(a,b)$ is
\begin{align*}
&\{(a,b),(b,a)\}, & \text{ if }a\neq b,\\
&\{(a,a)\},           & \text{ if } a=b.\\
\end{align*}

\begin{figure}[ht]
\centering
\begin{tikzpicture}
\coordinate (O) at (0,0);
\coordinate (A) at (\Width,0);
\coordinate (Aa) at (2.2,0);
\coordinate (B) at (0,\Height);
\coordinate (Ba) at (0,2.2);
\coordinate (C) at (\Width,\Height);

\draw[white,fill=green!30,opacity=0.6] (O) -- (A) -- (C) -- (B) -- cycle;
\draw[green,fill=green!60] (O) -- (A) ;
\draw[gray,fill=green!60,->] (A) -- (Aa) ;
\draw[green,fill=green!60] (O) -- (B) ;
\draw[gray,fill=green!60,->] (B) -- (Ba) ;
\draw[blue,fill=blue!30,opacity=0.6] (O) -- (C) ;
\draw (Aa) node[below] {$a$};
\draw (Ba) node[left] {$b$};
\end{tikzpicture}
\caption{Parameter space of sum of exponential example. Green: $\{(a,b)\in\RR^2_{\geq0}\mid a\neq b\}$, Blue: $\{(a,a)\mid a\in\RR_{\geq0}\}$}
\label{fig:sum-exp}
\end{figure}
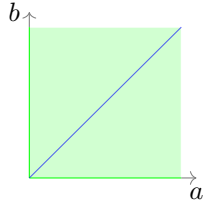
\done
\end{eg}

\begin{eg}[An ODE system with a solution]\label{eg-ODESoln1}
We consider the ODE system with variable $(x_1,x_2)\in\RR_{\geq0}^2$ and parameter $(p_1,p_2)\in\RR_{>0}^2$ given by
\begin{align*}
\dot{x}_1 &=-p_1x_1\\
\dot{x}_2 &=p_1x_1-p_2x_2
\end{align*}
with known initial conditions $x_1(0)=c_1$ and $x_2(0)=0$, and observable output $(x_1,x_2)$. Set $U:=\{(p_1,p_2)\mid p_1\neq p_2\}$. For $(p_1,p_2)\in U$, a solution to this system is given by
\vspace{-0.2cm}
\begin{align*}
x_1(t) &=c_1e^{-p_1t}\\
x_2(t) &=\frac{c_1p_1}{(p_2-p_1)}\left(e^{-p_1t}-e^{-p_2t}\right).
\end{align*}
When $p_1=p_2$, the ODE system becomes
\begin{align*}
\dot{x}_1 &=-p_1x_1\\
\dot{x}_2 &=p_1(x_1-x_2),
\end{align*}
and a solution is given by 
\vspace{-0.2cm}
\begin{align*}
x_1(t) &=c_1e^{-p_1t}\\
x_2(t) &={c_1p_1t}e^{-p_1t}.
\end{align*}

The $2r+1$ result \cite{Sontag:2rplus1} implies that, for  general $(t_1,\ldots,t_5)$, the time series data $(e^{-at_1}+e^{-bt_1},\ldots,e^{-at_5}+e^{-bt_5})$ induces the same equivalence relation on the parameter space $\RR_{\geq 0}^2$ as the continuous data. As in the previous example, we will show that this can be achieved by taking a time series with two distinct nonzero time points. We can again reduce to the case $t_1=1,t_2=2$. Suppose that $(p_1,p_2)$ and $(p_1',p_2')$ are two parameters that produce the same perfect data. The first case we consider is when they both belong to $U$, then we have
\begin{align*}
c_1e^{-p_1}&=c_1e^{-p_1'},\\
c_1e^{-2p_1}&=c_1e^{-2p_1'},\\
\frac{c_1p_1}{(p_2-p_1)}(e^{-p_1}-e^{-p_2})&=\frac{c_1p_1'}{(p_2'-p_1')}(e^{-p_1'}-e^{-p_2'}),\\
\frac{c_1p_1}{(p_2-p_1)}(e^{-2p_1}-e^{-2p_2})&=\frac{c_1p_1'}{(p_2'-p_1')}(e^{-2p_1'}-e^{-2p_2'}).
\end{align*}
The first equation implies that $p_1=p_1'$ since $c_1\neq0$ and the exponential function is injective. Using the last two equations we find that we have
\begin{align*}
e^{-p_1}+e^{-p_2}=\frac{\frac{c_1p_1}{(p_2-p_1)}(e^{-2p_1}-e^{-2p_2})}{\frac{c_1p_1}{(p_2-p_1)}(e^{-p_1}-e^{-p_2})}=\frac{\frac{c_1p_1'}{(p_2'-p_1')}(e^{-2p_1'}-e^{-2p_2'})}{\frac{c_1p_1'}{(p_2'-p_1')}(e^{-p_1'}-e^{-p_2'})}=e^{-p_1'}+e^{-p_2'},
\end{align*}
And since $p_1=p_1'$, it follows that $p_2=p_2'$. Next, if we suppose that neither belongs to $U$, that is, $(p_1,p_1)$ and $(p_1',p_1')$ produce the same perfect data, we then have
\begin{align*}
c_1e^{-p_1}&=c_1e^{-p_1'},\\
c_1e^{-2p_1}&=c_1e^{-2p_1'},\\
{c_1p_1}e^{-p_1}&={c_1p'_1}e^{-p'_1},\\
{2c_1p_1}e^{-2p_1}&={2c_1p'_1}e^{-2p'_1}.
\end{align*}
The first equation already implies that $p_1=p_1'$. Finally, we suppose that one parameter is in $U$ and the other is not, that is, $(p_1,p_2)$ with $p_1\neq p_2$ and $(p_1',p_1')$ produce the same perfect data. We then have
\vspace{-0.2cm}
\begin{align*}
c_1e^{-p_1}&=c_1e^{-p_1'}\\
c_1e^{-p_1^2}&=c_1e^{-p_1'^2}\\
\frac{c_1p_1}{(p_2-p_1)}(e^{-p_1}-e^{-p_2})&={c_1p'_1}e^{-p'_1}\\
\frac{c_1p_1}{(p_2-p_1)}(e^{-2p_1}-e^{-2p_2})&={2c_1p'_1}e^{-2p'_1}.
\end{align*}
The first two equations imply that $p_1=p'_1$ and so the last two equations become
\begin{align*}
\frac{c_1p_1}{(p_2-p_1)}(e^{-p_1}-e^{-p_2})     &={c_1p_1}e^{-p_1}\\
\frac{c_1p_1}{(p_2-p_1)}(e^{-2p_1}-e^{-2p_2}) &    ={2c_1p_1}e^{-2p_1}.
\end{align*}
If $p_1=0$, then $p_2$ is not further constrained. If $p_1\neq0$, the equations simplify to
\begin{align*}
\frac{1}{(p_2-p_1)}(e^{-p_1}-e^{-p_2})     &={}e^{-p_1}\\
\frac{1}{(p_2-p_1)}(e^{-2p_1}-e^{-2p_2}) &={2}e^{-2p_1},
\end{align*}
and so
\begin{align*}
e^{-p_1}+e^{-p_2}=\frac{\frac{1}{(p_2-p_1)}(e^{-2p_1}-e^{-2p_2})}{\frac{1}{(p_2-p_1)}(e^{-p_1}-e^{-p_2})}=\frac{{2}e^{-2p_1}}{e^{-p_1}}=2e^{-p_1}.
\end{align*}
But this implies that $p_1=p_2$, a contradiction. Hence, the third case was not possible in the first place.

We conclude that the equivalence class of the parameter $(p_1,p_2)\in P$ is
\begin{align*}
&\{(p_1,p_2)\}                                      & \text{ if } p_1\neq 0,\\
&\{(0,q) \mid q\in \RR_{\geq 0}\} & \text{ if }p_1=0.
\end{align*}\done
\end{eg}

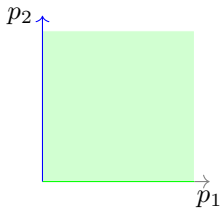
\begin{figure}
\centering
\begin{tikzpicture}
\coordinate (O) at (0,0);
\coordinate (A) at (\Width,0);
\coordinate (Aa) at (2.2,0);
\coordinate (B) at (0,\Height);
\coordinate (Ba) at (0,2.2);
\coordinate (C) at (\Width,\Height);

\draw[white,fill=green!30,opacity=0.6] (O) -- (A) -- (C) -- (B) -- cycle;
\draw[green,fill=green!60] (O) -- (A) ;
\draw[gray,fill=green!60,->] (A) -- (Aa) ;
\draw[blue,fill=red!60,->] (O) -- (Ba) ;
\draw (Aa) node[below] {$p_1$};
\draw (Ba) node[left] {$p_2$};
\end{tikzpicture}
\caption{Parameter space of ODE example. Blue: $p_1 = 0$, and
Green: $\{(p_1,p_2)\in\RR^2_{\geq0}\mid p_1\neq 0\}$. }
\label{fig:ode}
\end{figure}
Example \ref{eg-ODESoln1} is an exception. In general one cannot so easily find an exact solution to an ODE system. Nevertheless, describing the equivalence classes can still be possible. Indeed, there are various approaches to building what is called in the literature an exhaustive summary (see for example \cite{otc-jrb-ebc:MethodsForIdentifiability}). An \emph{exhaustive summary} is simply a (not necessarily finite) collection $E$ of functions $P\to \RR$ that makes the model-data equivalence relation effective, that is, $p\sim\!\!_{M}p'$ if and only if $f(p)=f(p')$ for all $f\in E$. The differential algebra approach, introduced by Ljung and Glad \cite{Ljung} and Ollivier \cite{Ollivier}, relies on using exhaustive summaries. For an ODE system with time series data given by rational functions, one derives an input-output equation whose coefficients (once normalized so that the first term is one) provide an exhaustive summary for a dense open subset of parameter space.

Additional details on exhaustive summaries are given by Ollivier \cite{Ollivier} and Meshkat et al. \cite{Meshkat:2011is}, and software is available for computing input-output equations \cite{DAISY}. Exhaustive summaries are useful for determining identifiability (subsequently defined) and finding identifiable parameter combinations.


\subsection{Structural Identifiability}\label{Subsection-Identifiability}

We formulate a definition of structural identifiability in terms of the model-data equivalence relation defined at the beginning of this section. We base our rigorous understanding of the various flavors of identifiability in Sullivant's in-progress book on Algebraic Statistics \cite{ss:as} and Di Stefano III's book on Systems Biology \cite{DiStefanoIII-book}.

\begin{defn}[Structural Identifiability]
Let $(M,z)$ be a mathematical model with a choice of perfect data $z$ inducing an equivalence relation $\sim\!\!_{M,z}$ on the parameter space $P$.
\begin{itemize}
\item The pair $(M,z)$ is \emph{globally identifiable} if every equivalence classe consists of a single element.

\item The pair $(M,z)$ is \emph{generically identifiable} if for almost all $p\in P$, the equivalence class of $p$ consist of a single element.

\item The pair $(M,z)$ is \emph{locally identifiable} if for almost all $p\in P$, the equivalence class of $p$ has no accumulation points.

\item The pair $(M,z)$ is \emph{non-identifiable} if at least one equivalence class contains more than one element.

\item The pair $(M,z)$ is \emph{generically non-identifiable} if for almost all $p\in P$, the equivalence class of $p$ has accumulation points (or are positive dimensional). 
\end{itemize}
\end{defn}

\begin{rmk}
 In the definition above, ``almost all'' is used to mean that the property holds on a dense open subset of parameter space with respect to the usual Euclidean topology on $\RR^r\supset P$. Recall also that $q\in Q\subseteq P\subseteq \RR^r$ is an accumulation point of $Q$ if every open neighborhood of $p$ contains infinitely many elements of $Q$.
\end{rmk}

\begin{rmk}
In the ODE systems literature, where local identifiability is the main concern, ``non-identifiable'' is often used to mean what we have called ``generically non-identifiable''. \end{rmk}

\begin{eg}[The sum of exponentials]\label{eg-SumExponentials2}
We revisit Example \ref{eg-SumExponentials1} where we computed the equivalence class of any parameter $(a,b)\in\RR_{\geq 0}$. We found that $[(a,b)]=\{(a,b),(b,a)\}$, where $[(a,b)]$ denotes the set of parameters equivalent to $(a,b)$. It follows that this model is not globally identifiable, and so non-identifiable. This model is locally identifiable since every equivalence class has size at most 2.\done\end{eg}

\begin{eg}[Two biased coins]\label{eg-2BiasedCoins2}
We revisit the model considered in Example \ref{eg-2BiasedCoins1}. We showed that the equivalence class of a parameter $(p_1,p_2,p_3)\in[0,1]^3$ is
\begin{align*}
&\{(p_1,p_2,p_3),(1-p_1,p_3,p_2)\}     & \text{ if }p_1\neq 0,1,1/2~p_2\neq p_3\\
&\{(q,p_2,p_2) \mid q\in [0,1]\}              & \text{ if } p_2=p_3\\
&\{(0, q, p_3) \mid q\in [0,1] \}\cup \{(1,p_2,q) \mid q\in [0,1]\}  & \text{ if } p_1=0,1,1/2 ~ p_2\neq p_3,\\
&\{(1/2,p_2,p_3)\}                                    & \text{ if } p_1=1/2.
\end{align*}
This model is not globally identifiable (in fact no equivalence class is a singleton), but it is locally identifiable. Indeed, the equivalence classes have size two for almost all values of the parameter; only the parameters in the 2-dimensional subset $\{(p_1,p_2,p_3) \mid p_1(p_1-1)(p_2-p_3)=0\}$ have positive dimensional equivalence classes.\done\end{eg}

\begin{eg}[Fitting points to a line]\label{eg-Line2}
We revisit the model discussed in Example \ref{eg-Line1}. We saw that $[(a_0,a_1)]=\{(a_0,a_1)\}$ for all possible values of the parameter, therefore this model is globally identifiable.\done\end{eg}

\begin{eg}[An ODE system with an exact solution]\label{eg-ODESoln2}
For the model studied in Example \ref{eg-ODESoln1}, the equivalence class of a parameter $p=(p_1,p_2)\in P=\RR^2_{\geq0}$ is
\begin{align*}
&\{(p_1,p_2)\}                                      & \text{ if } p_1\neq 0,\\
&\{(0,q) \mid q\in \RR_{\geq 0}\} & \text{ if }p_1=0.
\end{align*}
As some equivalence classes are infinite, this model is not globally identifiable, but it is generically identifiable. Indeed, the equivalence classes of parameters belonging to the dense open subset $\{(p_1,p_2)\in P \mid p_1\neq 0\}$ have size 1.\done\end{eg}

\begin{eg}[{A nonlinear ODE model, see \cite[Example 6]{gm-er-mjc-hpw:DiffAlgIdent} and \cite[Example 5]{nm-me-jjd3:CombosAlgo}}]\label{eg-NonlinODE}
We now consider a model given by an ODE system with time series data and describes the behavior of a variable $(x_1,x_2)$ depending on a $5$-dimensional parameter $(p_1,p_2,p_3,p_4,p_5)$ with measurable output $y=x_1$. The ODE system is given by:
\begin{align*}
\dot{x}_1 =&p_1x_1-p_2x_1x_2\\
\dot{x}_2 =&p_3x_2(1-p_4x_2)+p_5x_1x_2
\end{align*}
The differential algebra method produces an exhaustive summary
\begin{align*}
\phi_1=\frac{p_3p_4}{p_2}-1,~ \phi_2=\frac{-2p_1p_3p_4}{p_2}  - p_3,~ \phi_3=-p_5,~\phi_4=\frac{p^2_1p_3p_4}{p_2} + p_1p_3,~ \phi_5=p_1p_5.
\end{align*}
That is, there is a dense open subset $U\subseteq P$ on which the model-data equivalence relation coincides with the equivalence relation given by the map
\begin{alignat*}{3}
\phi&\colon& U&\to &&\RR^4\\
               & &(p_1,p_2,p_3,p_4,p_5)&\mapsto && (\phi_1,\phi_2,\phi_3,\phi_4,\phi_5)
\end{alignat*}
We may take $U$ to be the set of parameters such that all $p_2$, $p_5$ and $2p_2+p_2p_3+p_1p_2-4p_1p_3p_4$ are nonzero. Then for $(p_1,p_2,p_3,p_4,p_5)\in U$, we have
\begin{align}\label{eqn-phi-inv1}
p_1&= -\frac{\phi_5}{2\phi_3}, \quad \quad \quad 
p_3=-2-\phi_2- \frac{2\phi_1\phi_5}{\phi_3},\\
\frac{p_4}{p_2}&=\frac{\phi_3(1+\phi_1)}{-2\phi_3-\phi_2\phi_3-2\phi_1\phi_5}, \quad \quad \quad 
p_5= \phi_5. \label{eqn-phi-inv2}
\end{align}
Let $\rho\colon U\to \RR^4$ be the map given by $(p_1,p_2,p_3,p_4,p_5)\mapsto (p_1,p_3,p_4/p_2,p_5)$. The map $\phi$ factors through $\rho$, and the formulas \eqref{eqn-phi-inv1},\eqref{eqn-phi-inv2} above provide an inverse for the induced function $\phi\colon \rho(U)\to \phi(U)$, and so in particular this function is bijective. It follows that for $(p_1,p_2,p_3,p_4,p_5)\in U$ the function  $\rho$ determines the model-data equivalence relation. Therefore, the equivalence class of $(p_1,p_2,p_3,p_4,p_5)\in U$ is
\[\left\{ \left(p_1,q_1,p_3,\frac{p_4}{p_2}\cdot q\right) \sth q\in \RR\right\}.\]
Hence, all parameters in $U$ have a 1-dimensional equivalence class and we conclude that the model is generically non-identifiable.\done\end{eg}

The main strategy we employed in the above examples was to construct a map $\phi\colon P\to\RR^N$ for some $N$, such that $p\sim\!\!_{M,z} p'$ if and only if $\phi(p)=\phi(p')$, that is, a map making the equivalence relation $p\sim\!\!_{M,z} p'$ effective. The model we considered was given in this way, or we evaluated an explicit time dependent model (or a solution to an ODE model) at finitely many timepoints, or else we used an alternative method to obtain an exhaustive summary and thus such a map. When the model-data equivalence relation can be made effective via a differentiable map $f\colon P\to \RR^N$, that is, when we can find $f$ such that $\sim\!\!_{M,z} =~\sim_f$, it is also possible to determine the local identifiability of the model by looking at the Jacobian of $f$. The model is locally identifiable if and only if the Jacobian of $f$  has full rank for generic values of $p$. Indeed, this is an immediate consequence of the Inverse Function Theorem. This method is regularly employed in algebraic statistics when considering specific models (see for example \cite[Proposition 15.1.7]{ss:as}).

In the case of ODE systems for which we do not have a solution and are unable to obtain an exhaustive summary, there are computational methods for establishing the (local) identifiability, see e.g. \cite{otc-jrb-ebc:MethodsForIdentifiability}, \cite{ar-jk-mps-mj-jt:comparisonIdentifiability} for a survey of the techniques available.


\section{Model Predictions}\label{Section-ModelPredictions}

In this section we provide a rigorous definition and a more mathematically correct name for ``model manifold'', a geometric object that takes center stage in the sloppiness literature \cite{Transtrum:2010ci,Transtrum:2011de,Transtrum:2014hr,Transtrum:2015hm}.

\begin{defn}
Let $M$ be a mathematical model with parameter space $P$ and a choice of perfect data. Suppose that the perfect data produced for each parameter value $p\in P$ is a point of $\RR^N\!$ for some $N$. A  \emph{model prediction map} is a map $\phi\colon P\to \RR^N$ that expresses the perfect data produced for the parameter value $p$ as a function $\phi(p)$.\end{defn}

A model prediction map is a geometric  realization of the quotient $P/\!\!\sim\!\!_{M,z}$ in the sense that it factors through the set-theoretic quotient $P\to P/\!\!\sim\!\!_{M,z}$ in such way that the induced map $\overline{\phi}\colon P/\!\!\sim\!\!_{M,z}\to \RR^N$ is injective.

A model prediction map is meant to be more than just a map making the model-data equivalence relation effective: we want to use this map to perform parameter estimation by finding the nearest model prediction (in the image of $\phi$) to a given noisy data point (in the data space, possibly off the image of $\phi$). 

\begin{rmk}
The sloppiness literature uses the term ``model manifold'' for the image of a model prediction map \cite{Transtrum:2010ci,Transtrum:2011de,Transtrum:2014hr,Transtrum:2015hm}. Although in general the image of $\phi$ is not a manifold as such, using the term manifold has the benefit of bringing into focus the geometric structure of mathematical models.
\end{rmk}

\begin{rmk}
Note that we do not require a model prediction map to satisfy the universal property of a categorical quotient, that is, we do not require that any map that is constant on the equivalence class factors through $\phi$. 
\end{rmk}

Each fiber of $\phi$ is a single equivalence class. As a consequence, when there is a model prediction map, then $\sim\!\!_{M,z} =~~\sim\!\!_\phi$, that is, the model-data equivalence relation coincides with the equivalence relation induced by $\phi$. Therefore,  identifiability can be characterized in terms of model prediction maps:


\begin{prop}
Let $M$ be a mathematical model and suppose there is a model prediction map $\phi\colon P\to \RR^N$ for some $N>0$. Then
\begin{itemize}
\item The pair $(M,\phi)$  is \emph{globally identifiable} if $\phi$ is injective.

\item The pair $(M,\phi)$  is \emph{generically identifiable} if $\phi$ is generically injective.

\item The pair $(M,\phi)$  is \emph{locally identifiable} if almost all non-empty fibers of $\phi$ have no accumulation points.

\item The pair $(M,\phi)$  is \emph{non-identifiable} if $\phi$ is not injective.

\item The pair $(M,\phi)$  is \emph{generically non-identifiable} if almost all non-empty fibers of $\phi$ have accumulation points.
\end{itemize}
\end{prop}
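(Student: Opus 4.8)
The plan is to observe that this Proposition is essentially a translation exercise: each clause follows by unwinding the definitions of Section~\ref{Subsection-Identifiability} through the single identity $\sim\!\!_{M,z} = \sim\!\!_\phi$ established in the paragraph preceding the statement (``Each fiber of $\phi$ is a single equivalence class''). First I would record that identity explicitly as the backbone of the argument: for $p,p'\in P$ we have $p\sim\!\!_{M,z}p'$ if and only if $\phi(p)=\phi(p')$, so that the non-empty fibers of $\phi$ are exactly the equivalence classes of $\sim\!\!_{M,z}$, and the induced map $\overline\phi\colon P/\!\!\sim\!\!_{M,z}\to\RR^N$ is injective by the definition of a model prediction map.

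With that in hand, I would take the five clauses one at a time. For global identifiability: every equivalence class is a singleton $\iff$ every non-empty fiber of $\phi$ is a singleton $\iff$ $\phi$ is injective. For non-identifiability: negate the previous line, i.e.\ some equivalence class has more than one element $\iff$ some fiber has more than one point $\iff$ $\phi$ is not injective. For generic identifiability: by definition the equivalence class of $p$ is a singleton for all $p$ in a dense open subset $U\subseteq P$; via the fiber identification this says $\phi^{-1}(\phi(p))=\{p\}$ for all $p\in U$, which is exactly the statement that $\phi$ is generically injective. For local identifiability: the equivalence class $[p]_{M,z}$ equals the fiber $\phi^{-1}(\phi(p))$, so ``$[p]_{M,z}$ has no accumulation points for almost all $p$'' is literally ``almost all non-empty fibers of $\phi$ have no accumulation points.'' The generically non-identifiable clause is handled the same way, replacing ``has no accumulation points'' by ``has accumulation points.''

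The only point that needs a sentence of care is a bookkeeping subtlety in the ``almost all fibers'' clauses: the definitions quantify over almost all $p\in P$ (a dense open set of parameters), whereas the Proposition phrases things in terms of almost all non-empty fibers. I would note that since each fiber $\phi^{-1}(\phi(p))$ containing a point of a dense open $U$ is exactly an equivalence class meeting $U$, the two formulations describe the same collection of subsets of $P$, so no genuine measure-theoretic argument on the quotient is required; the phrase ``almost all non-empty fibers'' is simply shorthand for ``the fibers through almost every parameter.'' I expect this is the only place a careless reader might worry, and it is the natural candidate for ``the main obstacle,'' though it is really just a matter of stating the dictionary precisely rather than a substantive difficulty. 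Everything else is immediate from $\sim\!\!_{M,z} = \sim\!\!_\phi$ together with the definitions already in place.
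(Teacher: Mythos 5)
Your proposal is correct and matches the paper's intent exactly: the paper states this proposition without proof, as an immediate consequence of the preceding observation that each non-empty fiber of $\phi$ is a single equivalence class, so that $\sim\!\!_{M,z}\,=\,\sim\!\!_\phi$ and each clause is a direct restatement of the corresponding definition from Section~\ref{Subsection-Identifiability}. Your added remark reconciling ``almost all $p\in P$'' with ``almost all non-empty fibers'' is a reasonable clarification of the same dictionary.
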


In some situations, it may be possible to construct a model prediction map only on a dense open subset of parameter space. A subset $E\subseteq P$ is \emph{$\sim\!\!_{M,z}$-stable} if $p\in E$ and $p'\sim\!\!_{M,z} p$ implies $p'\in E$, that is, $E$ is the union of equivalence classes.

\begin{defn}
 A \emph{generic model prediction map} is a model prediction map $\varphi\colon U\to \RR^N$ that is defined on a $\sim\!\!_{M,z}$-stable dense open subset $U\subseteq P$ of parameter space. 
\end{defn}

We will use the notation  $\varphi\colon P\dashrightarrow \RR^N$ borrowed from rational maps in the algebraic category to denote generic model prediction map when the exact domain of definition is unknown or not important. Three of the above notions of identifiability can be rephrased in terms of generic model prediction maps:

\begin{prop}
Let $M$ be a mathematical model and suppose there is a generic model prediction map $\varphi\colon P\dashrightarrow \RR^N$ for some $N>0$. Then
\begin{itemize}

\item The pair $(M,\varphi)$ is \emph{generically identifiable} if $\varphi$ is injective on its domain of definition.

\item The pair $(M,\varphi)$  is \emph{locally identifiable} if almost all non-empty fibers of $\varphi$ have no accumulation points.

\item The pair $(M,\varphi)$  is \emph{generically non-identifiable} if almost all non-empty fibers of $\varphi$ have accumulation points.
\end{itemize}
\end{prop}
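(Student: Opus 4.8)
The plan is to reduce each of the three statements to the corresponding clause of the Definition of structural identifiability, by matching the non-empty fibers of $\varphi$ with the equivalence classes that meet its domain of definition $U$. The first step is the key observation, which is the generic analogue of ``each fiber of $\phi$ is a single equivalence class'' established above for model prediction maps: since $U$ is $\sim\!\!_{M,z}$-stable, the class $[p]_{M,z}$ of any $p\in U$ is contained in $U$, so it is the same whether computed in $P$ or in $U$; and since the induced map $\overline{\varphi}\colon U/\!\!\sim\!\!_{M,z}\to\RR^N$ is injective, this class equals the fiber $\varphi^{-1}(\varphi(p))$. Hence the non-empty fibers of $\varphi$ are exactly the equivalence classes of the points of $U$.

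The second step is to check that ``generic'' transfers between $U$ and $P$. If $V\subseteq U$ is dense and open in $U$, then $V$ is dense and open in $P$: it is open because $U$ is open in $P$, and it is dense because $\overline{V}^{P}\cap U=\overline{V}^{U}=U$ forces $\overline{V}^{P}\supseteq\overline{U}^{P}=P$. Conversely, the trace on $U$ of a dense open subset of $P$ is dense open in $U$. Since $U$ is itself dense and open in $P$, a property holding for almost all $p\in U$ holds for almost all $p\in P$, and conversely; this is also the precise meaning (via the remark after the Definition of structural identifiability) of the hypotheses ``almost all non-empty fibers of $\varphi$ have [no] accumulation points''.

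With these two reductions the three bullets follow immediately. If $\varphi$ is injective on $U$, then every non-empty fiber is a singleton, hence so is the equivalence class of every $p\in U$; as $U$ is dense open in $P$, the model is generically identifiable. For the second bullet, the set of $p\in U$ for which $\varphi^{-1}(\varphi(p))$ has no accumulation point equals, by the first step, the set of $p\in U$ whose equivalence class has no accumulation point, and by hypothesis together with the second step this set is dense open in $P$, which is exactly local identifiability. The third bullet is the same argument with ``has no accumulation point'' replaced throughout by ``has an accumulation point (or is positive-dimensional)''.

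The whole argument is a bookkeeping exercise in unwinding the definitions of generic model prediction map, $\sim\!\!_{M,z}$-stability, and the relevant flavors of identifiability. The only point that requires genuine care is the second step: verifying that ``generic'' (dense and open) behaves well under passing between $P$ and its dense open subset $U$, so that a conclusion established on $U$ is a legitimate statement about all of $P$.
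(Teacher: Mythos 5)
Your proposal is correct and follows exactly the route the paper intends: the paper states this proposition without proof, as an immediate consequence of the observation that each non-empty fiber of a (generic) model prediction map is a single equivalence class, together with the fact that the domain of definition $U$ is a $\sim\!\!_{M,z}$-stable dense open subset of $P$. Your two steps --- identifying fibers with equivalence classes via stability and injectivity of the induced map on the quotient, and checking that ``dense open'' transfers correctly between $U$ and $P$ --- are precisely the bookkeeping the paper leaves implicit, and both are carried out correctly.
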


In the algebraic category, we have an additional notion of identifiability:

\begin{defn}[Rational Identifiability] 
Let $(M,\phi)$ (resp. $(M,\varphi)$) be a mathematical model with and algebraic model prediction map defined over $\RR$ (resp. a generic model prediction map given by a rational map with real coefficients). We say that  $(M,\phi)$  (resp. $(M,\varphi)$) is \emph{rationally identifiable} if and only if each parameter $p_j$ can be written as a rational function of the $\phi_i$'s (resp. the $\varphi_i$'s), or equivalently if the fields of rational functions are equal: $\RR(p_1,\ldots,p_r)=\RR(\phi_1,\ldots,\phi_n)$ (resp. $\RR(p_1,\ldots,p_r)=\RR(\varphi_1,\ldots,\varphi_n)$).
\end{defn}

Note that rational identifiability implies generic identifiability. The implication is strict because we are working over a non-algebraically closed field (i.e. $\RR$).

\begin{eg}[An example of global identifiability, but not rational identifiability]
Consider the model $M$ with model prediction map $\phi:\RR\to \RR$ defined on the parameter space $\RR$ by $p\mapsto p^3+p$. First, we show that $M$ is globally  identifiable. Let $a$ and $b$ be two real numbers such that $a^3+a=b^3+b$. We can rewrite  $a^3+a=b^3+b$ as $(a-b)(a^2+ab+b^2+1)=0$. The polynomial function $a^2+ab+b^2+1$ has no real zeros, since for any given $b\in \RR$, it is a polynomial of degree 2 in $a$ with discriminant $-3b^2-4<0$. It follows that $a=b$, and so the model is globally identifiable. As $x$ is not a rational function of $x^3+x$, $(M,\phi)$ is not rationally identifiable.\done\end{eg}

The case of finite discrete parametric statistical models is again the simplest case, since the parameterization map is a model prediction map.  
For the two biased coin model studied in Examples \ref{eg-2BiasedCoins1} and \ref{eg-2BiasedCoins2}, the map $\phi$ is a model prediction map. It is possible to have non-isomorphic sets of model predictions, and also, as in the following example, we may have model prediction maps  belonging to different categories (real-analytic vs algebraic).

\begin{eg}[Gaussian Mixtures]\label{eg-GaussianMixtures1}
We consider the mixture of two 1-dimensional Gaussians, a model that can be used to describe the behavior of one measurement we make on individuals belonging to two populations. The model goes back to Pearson in 1894 who developed the methods of moments while studying crabs in the Bay of Naples. We follow the treatment by Am\'endola, Faug\`ere and Sturmfels \cite{Amendola:2015um}. The parameter is 5-dimensional: $(\lambda,\mu,\sigma,\nu,\tau)\in[0,1]\times \RR\times \RR_{\geq 0}\times \RR\times \RR_{\geq 0}=:P$. The mixing parameter $\lambda$ gives the proportion of the first population, the remaining four coordinate parameters are the means and variances of the two Gaussian distributions: $\mu,\sigma$ and $\nu,\tau$. Note that this model is at best locally identifiable. Indeed, since we cannot tell to which population an individual belongs, the parameters $(\lambda,\mu,\sigma,\nu,\tau)$ and $(1-\lambda,\nu,\tau,\mu,\sigma)$ will induce the same probability distribution (that is, the same perfect data) and so we will have $[(\lambda,\mu,\sigma,\nu,\tau)]\supseteq\{(\lambda,\mu,\sigma,\nu,\tau),(1-\lambda,\nu,\tau,\mu,\sigma)\}$, that is, the equivalence class of a parameter includes its orbit under an affine action of the symmetric group on two elements. It follows that generic equivalence classes will have size at least 2. Non-generic special cases will include the case where both populations have the same behavior, that is, $(\mu,\sigma)=(\nu,\tau)$, and the case where only one population is actually present, that is, $\lambda=0$ or $\lambda=1$. In these cases the equivalence class of a parameter contains certain subsets as follows:
\begin{align*}
[(\lambda,\mu,\sigma,\mu,\sigma)]&\supseteq\{(q,\mu,\sigma,\mu,\sigma) \mid q\in[0,1]\} &&& \text{if }(\mu,\sigma)=(\nu,\tau)\\
[(0,\mu,\sigma,\nu,\tau)]&\supseteq \{(0,q_1,q_2,\nu,\tau),(1,\nu,\tau,q_1,q_2)\mid q_1\in \RR,~q_2\in\RR_{\geq0}\} &&& \text{if } \lambda=0\\
[(1,\mu,\sigma,\nu,\tau)]&\supseteq \{(1,\mu,\sigma,q_1,q_2),(0,q_1,q_2,\mu,\sigma)\mid q_1\in \RR,~q_2\in\RR_{\geq0}\} &&& \text{if } \lambda=1
\end{align*}
In particular, some non-generic equivalence classes will be 1 and 2-dimensional.

As well as a cumulative distribution function $F(x)$, this model has both a probability density function $f(x)$ and a moment generating function $M(t)$; either characterizes the model. The 
probability density function is the map
\begin{alignat*}{3}
f&\colon & \RR\times P &\to&& \RR\\
               &&  x &\mapsto&& \lambda\left( \frac{1}{\sigma\sqrt{2\pi}}e^{-\frac{(x-\mu)^2}{2\sigma^2}}\right)+
                                              (1- \lambda)\left( \frac{1}{\tau\sqrt{2\pi}}e^{-\frac{(x-\nu)^2}{2\tau^2}}\right),
\end{alignat*}
the cumulative distribution function is the map
\begin{alignat*}{3}
F&\colon & \RR\times P &\to&& \RR\\
               &&  x &\mapsto&& \lambda\left( \frac{1}{\sigma\sqrt{2\pi}}\int_{\infty}^x e^{-\frac{(t-\mu)^2}{2\sigma^2}}dt \right)+
                                              (1- \lambda)\left( \frac{1}{\tau\sqrt{2\pi}}\int_{\infty}^xe^{-\frac{(t-\nu)^2}{2\tau^2}}dt\right),
\end{alignat*}
and the moment generating function is
\[M(t)=\sum_{i=0}^\infty \frac{m_i}{i!}t^i=\lambda e^{\mu t+\sigma^2t^2/2}+(1-\lambda) e^{\nu t+\tau^2t^2/2}.\]
Note that the $m_i$'s are polynomial maps in the five parameters and $M(t)$ is defined on some interval $(-a,a)$.  Thus $M$ can be seen as a function $M\colon (-a,a)\times P \to \RR$. The statement that these three functions characterize the distribution means that the equivalence relations they induce on $P=[0,1]\times \RR_{\geq 0}^4$ coincide with the model-data equivalence relation. By the $2r+1$ result \cite{Sontag:2rplus1}, it follows that for generic $x_1,\ldots,x_{11}$ and $t_1,\ldots,t_{11}$ each of the functions
\begin{alignat*}{3}
\phi_1 &\colon & P &\to&& \RR^{11}\\
                          && p &\mapsto&& (f(x_1,p),\ldots,f(x_{11},p)),
\end{alignat*}
\begin{alignat*}{3}
\phi_2&\colon & P& \to&& \RR^{11}\\
                         & & p &\mapsto&& (F(x_1,p),\ldots,F(x_{11},p)),
\end{alignat*}
and
\vspace{-0.2cm}
\begin{alignat*}{3}
\phi_3&\colon & P& \to&& \RR^{11}\\
                   &  & p& \mapsto && (M(t_1,p),\ldots,M(t_{11},p))
\end{alignat*}
also induce the model-data equivalence relation. Let $X_1,...,X_K$ denote a random sample from the distribution. As the moment generating function can be estimated from the sample via $\frac{1}{K}\sum_{i=1}^K e^{tX_i}$, the map $\phi_3$ is a model prediction map. As the cumulative distribution map can be estimated by the empirical distribution function, $\phi_2$ is also a model prediction map. The probability density function can also, in principle, be indirectly estimated from the sample by numerically deriving the empirical distribution function that estimates the cumulative distribution function. Thus $\phi_1$ can also be also be considered as a model prediction map.

This model also has algebraic model prediction maps. Indeed, the set of moments $\{m_i \mid i\geq 0\}$ determines $M$, which implies that this set of polynomial functions $P\to \RR$ will also induce the model-data equivalence relation. To obtain an algebraic model prediction map it will suffice to find a finite separating set $E\subset \RR[m_i \mid i\geq 0]\subseteq \RR[\lambda,\mu,\sigma,\nu,\tau]$, that is, a set $E$ such that whenever two points of $\RR^5$ are separated by some $m_i$, there is an element of $E$ that separates them (see \cite{gk:si} for a treatment of separating sets for rings of functions). As $\RR[\lambda,\mu,\sigma,\nu,\tau]$ is a finitely generated $\kk$-algebra, by \cite[Theorem 2.1]{gk:si} finite separating sets exist, and for $d$ large enough the first $d+1$ moments $m_0,m_1,\ldots,m_d$ will form a separating set. In fact, through careful algebraic manipulations it is possible to show that the first $7$ moments already form a separating set (see \cite[Section 3]{Amendola:2015um} or \cite{dl:InjectivityGaussianMixture}). As it is possible to estimate moments from data (via the sample moments $\frac{1}{K}\sum_{i=1}^K X_i^j$ for $j\geq 1$), we have a fourth model prediction map
\begin{alignat*}{3}
\phi_4&\colon & [0,1]\times \RR_{\geq 0}^4 &\to&& \RR^6\\
                     && p &\mapsto&& (m_1,m_2,m_3,m_4,m_5,m_6).
\end{alignat*}\done\end{eg}

Let $M$ be given by a real-analytic ODE system with time series data or an explicit time dependent model with time series data. Then by the $2r+1$ result \cite{Sontag:2rplus1}, we know that there exist model prediction maps that capture all the time series information. For the explicit models it is simply a matter of choosing timepoints. For ODE systems, we would in principle need an exact solution. First, some examples of explicit time dependent models:

\begin{eg}[Fitting points to a line]\label{eg-Line3}
By the discussion in Example \ref{eg-Line1}, the model-data equivalence relation coincides with the equivalence relation induced by evaluating the variable $x$ at the timepoints $t_1=0$ and $t_2=1$. As there is an invertible linear transformation taking any two distinct timepoints $(t_1,t_2)$ to $(0,1)$, any choice of two timepoints will give a model prediction map
\begin{alignat*}{3}
\phi_{t_1,t_2}&\colon &\RR^2&\to&&\RR^2\\
                     &  & (a_0,a_1)&\mapsto&& (a_0+t_1a_1,a_0+t_2a_1).
\end{alignat*}
Each corresponding set of model predictions, that is the image of $\phi_{t_1,t_2}$, actually fill up $\RR^2$. The set of model predictions we would obtain by taking more timepoints would still be isomorphic to $\RR^2$.\done\end{eg}

\begin{eg}[Sum of exponentials]\label{eg-SumExponentials3}
By the $2r+1$ result \cite{Sontag:2rplus1}, any generic choice of 5 timepoints will provide a model prediction map, but as we saw in Example~\ref{eg-SumExponentials2}, two timepoints suffice. As in the paper \cite{Transtrum:2011de}, we use the three timepoints $t_1=1/3,t_2=1,t_3=3$ to define a model prediction map
\begin{alignat*}{3}
\phi &\colon & \RR_{\geq 0}&\to&& \RR^3\\
                     && (a,b) &\mapsto&& (e^{-\nicefrac{a}{3}}+e^{-\nicefrac{b}{3}},e^{-a}+e^{-b},e^{-3a}+e^{-3b}).
\end{alignat*}
The image of $\phi $, the corresponding set of model predictions, is a surface with a boundary given by the image of the line $\{(a,b)\mid a=b\}$. A set of model predictions obtained by measuring at two timepoints  will consist of a closed subset of the positive quadrant of $\RR^2$.\done\end{eg}

For an ODE system with time series data, if we have an exact solution then we can easily construct a model prediction map as in the explicit time dependent case. In the absence of a solution, it may still be possible to construct a model prediction map, at least on a dense open subset of parameter space. For example, the coefficients of the input-output equations used in the differential algebra approach to obtain an exhaustive summary can be estimated from data (see for example \cite[p. 17]{fb:DiffElim}). Hence, in this case one can construct a rational model manifold. For example, in Example \ref{eg-NonlinODE} the map $\phi\colon U\to \RR^5$, when seen as a rational map on the whole parameter space, is a rational model prediction map. In general, however, the best one can do is solve the ODE system numerically and build a \emph{numerical model prediction map} as is done in the sloppiness literature \cite{Transtrum:2010ci,Transtrum:2011de,Transtrum:2014hr,Transtrum:2015hm}. A numerical model prediction map will provide some information on the model equivalence relation induced by an exact model prediction map; the quality of this information will depend on the quality of the numerics.


\section{Sloppiness and its relationship to  identifiability}\label{Section-Sloppiness}

We consider a model $M$ with a fixed choice of model prediction map $\phi$. A similar analysis can be made for a model with a generic model prediction map $\varphi$ by replacing $P$ with the domain of definition of $\varphi$ where needed. For the rest of this paper we focus on models with model prediction maps. 

We now consider the situation in which the data are model predictions corrupted by measurement noise with a known probability distribution. Hence, according to our assumption, the \emph{noisy data} is the result of a random process. We define the \emph{data space} $Z\subseteq \RR^N$ to be the set of points of $\RR^N$ that can be obtained as a corruption of the perfect data; how much it extends beyond the model predictions will depend on the support of the probability distribution of the measurement noise. The probability density function of the noisy data that can arise for the parameter value $p\in P$ is denoted by $\psi(p,\cdot)\colon Z  \to \RR$; it is the probability density of observing data $z \in Z$, which, for each $p\in P$, depends on the model prediction $\phi(p)$ rather than depending directly on the parameter $p$.

The \emph{Kullback-Leibler divergence}, used in probability and information theory, quantifies the difference between two probability distributions \cite{sk-ral:klDiv}. We define a premetric on parameter space via the Kullback-Leibler divergence:
\begin{align}\label{eq:tildeDformula}
d(p,p') &:= \int_{Z} \psi(p,z) \log\left(\frac{\psi(p,z)}{\psi(p',z)} \right) dz.
\end{align}
Gibb's Inequality \cite{tmc-jat:infTheory} proves that the Kullback-Leibler divergence is nonnegative, and zero only when the two probability distributions are equal on a set of probability one. It follows that $d$ is a \emph{premetric}, that is, $d(p,p')\geq 0$ and $d(p,p)=0$. Furthermore, $d(p,p')=0$ if and only if the probability distributions $\psi(p,\cdot)$ and $\psi(p',\cdot)$ are equal on a set of probability one, which is equivalent to $\phi(p)=\phi(p')$, since the dependance of $\psi$ on $p$ is only via the model prediction $\phi(p)$. Note that in general the Kullback-Leibler divergence and the premetric $d$ are not symmetric and do not satisfy the triangle inequality.

\begin{eg}[The case of additive Gaussian measurement noise]\label{eg-AdditiveGaussianNoise1}
Suppose the observations of a model prediction are distributed as follows:
\begin{align}
z \sim \mathcal{N}(\phi(p), \Sigma), \label{eq:gaussianNoise}
\end{align}
where  $\mathcal{N}(\phi(p), \Sigma)$ denotes a multivariate Gaussian distribution with mean $\phi(p) \in \RR^N$ and covariance matrix $\Sigma$, a $N\times N$ positive semi-definite matrix. This is equivalent to specifying that $z=\phi(p)+\epsilon$ where $\epsilon \sim \mathcal{N}(0, \Sigma)$, that is, the measurement noise is additive and Gaussian. We let $K$ be the number of experimental replicates, or the size of the sample. The density of a multivariate Gaussian then gives $\psi(p,\cdot)$ as
\begin{align*}
&\psi(p,z) = \left(2\pi\right)^{-\frac{NK}{2}} |\Sigma|^{-\frac{K}{2}} \exp \left( -\frac{K}{2} \big\langle \left(z - \phi(p) \right), \Sigma^{-1} \big(z - \phi(p) \big) \big\rangle \right),
\end{align*}
where $\langle \cdot,\cdot\rangle$ denotes the inner product.
The computation of \eqref{eq:tildeDformula}  then yields :
\begin{align}
&d(p,p') =   \frac{K}{2} \Big\langle \phi(p) - \phi(p'), \Sigma^{-1} \big(\phi(p) - \phi(p')  \big) \Big\rangle, \label{eq:gaussianTildeD}
\end{align}
 (details provided in \cite{Duchi:LinAlgDerivations}). Thus $d(p,p')$ is a weighted sum of squares, and so it is symmetric and satisfies the triangle inequality, and hence $d$ is a pseudometric. In particular, if $\Sigma$ is the identity matrix, then $d$ is induced by half of the square of the Euclidean distance in data space. The pseudometric $d$ is a metric exactly when the model is globally identifiable, since then $d(p,p')=0\Leftrightarrow\phi(p)=\phi(p')\Leftrightarrow p=p'$.\done
 \end{eg} 

It is often possible to equip parameter space with a metric, a natural choice being the Euclidean metric inherited from the ambient $\RR^r$. For instance our model might be of a chemical reaction network, where the coordinates of the parameter correspond to the positive, real-valued rate constants associated with particular chemical reactions.  In this case, a reasonable choice of reference metric is the Euclidean distance between different points in the positive real quadrant. The reference metric on parameter space may not be Euclidean. For example, the natural metric on tree space that arises in Phylogenetics, the BHV metric, is  non-Euclidean \cite{BHV:Treespace2001}.

We can now offer a new precise, but qualitative, definition of sloppiness. We discuss two different quantifications in the following two sections:

\begin{defn}\label{defn-Sloppiness}
Let $(M,\phi,\psi,d_P)$ be a mathematical model with a choice of model prediction map, a specific assumption on the probability distribution of the noisy data, and a choice of reference metric on $P$. We say that $(M,\phi,\psi,d_P)$ is \emph{sloppy} at $p_0$ if in a neighborhood of $p_0$ the premetric $d$ diverges significantly from the reference metric on parameter space.
\end{defn}

\subsection{Infinitesimal Sloppiness}\label{subsection:inf-slop}

We first provide the generally accepted and original quantification of sloppiness found in the literature, which we explain in terms of our new qualitative definition of sloppiness (see Definition \ref{defn-Sloppiness}). The  sloppiness literature makes the implicit assumption that the reference metric on parameter space is the standard Euclidean metric, and we make the same assumption in this section.

Fix $p_0\in P$ and consider the map $d(\cdot,p_0)\colon P\to \RR_{\geq 0}$ mapping $p$ to $d(p,p_0)$. Suppose that $d(\cdot,p_0)$ is twice continuously differentiable in a neighborhood of $p_0$. By definition, $d(p_0,p_0) = 0$, and furthermore $p_0$ is a local minimum of $d(\cdot,p_0)$, implying a null Jacobian. Therefore an approximation of $d(p,p_0)$ for $p$ in a neighborhood of $p_0$ is given by the Taylor expansion
\begin{align}\label{eq:infTildeD}
d(p,p_0) = \frac{1}{2}\Big\langle (p-p_0), \left(\nabla_p^2d(p,p_0)\right)|_{p=p_0} (p-p_0) \Big\rangle + \mathcal{O}(\|(p-p_0)\|_2),
\end{align}
where $\|\cdot\|_2$ is the Euclidean norm and  $ \nabla_p^2d(p,p_0)$ is the Hessian of the function $d(p,p_0)$, that is, the matrix $\left(\frac{\partial^2}{\partial p_i\partial p_j}d(p,p_0)\right)_{i,j}$ of second derivatives with respect to the coordinate parameters. This Hessian evaluated at $p=p_0$ is known as the \emph{Fisher Information Matrix (FIM) at $p_0$}.

 Local minimality of $d(\cdot,p_0)$ at $p_0$ ensures that the matrix $\left(\nabla_p^2d(p,p_0)\right)|_{p=p_0}$ is positive semidefinite, and so the FIM at $p_0$ induces a pseudometric on parameter space
 \vspace{-0.2cm}
 \begin{alignat*}{3}
 d_{\rm{FIM},p_0}&\colon & P\times P &\to&& \RR_{\geq 0}\\
                               && (p,p')& \mapsto && \frac{1}{2}\Big\langle (p-p'), \left(\nabla_p^2d(p,p_0)\right)|_{p=p_0} (p-p')\Big\rangle.
 \end{alignat*}
Note that the pseudo-metric $d(\cdot,p_0)$ is not the Fisher Information metric. When the FIM is positive definite, the Fisher Information metric is the Riemannian metric induced by the FIM by computing the line integral of the geodesic linking two parameters $p,p'\in P$ \cite{Amari2007}.

\begin{eg}[The case of additive Gaussian measurement noise]\label{eg-AdditiveGaussianNoise2}
In the sloppiness literature, measurement noise is assumed Gaussian, as in Example \ref{eg-AdditiveGaussianNoise1}, and  for $K=1$ the FIM $\left(\nabla_p^2d(p,p_0)\right)|_{p=p_0}$ is known as the \emph{sloppiness matrix at $p_0$}. 
Explicitly, the sloppiness matrix is
\begin{align}
\left(\nabla_p^2d(p,p_0)\right)|_{p=p_0}=   \frac{1}{2} \left((\nabla_p \phi(p))|_{p=p_0}\right)^T \Sigma^{-1} \left((\nabla_p \phi(p))|_{p=p_0}\right), \label{eq:sloppinessFormula}
\end{align}
where $(\nabla_p \phi(p))|_{p=p_0}$ denotes the Jacobian of $\phi$ with respect to the coordinate parameters evaluated at $p=p_0$.\done
\end{eg}

\begin{rmk}[Structural identifiability and the FIM]
The FIM is intimately linked to structural identifiability. Indeed, a result of Rothenberg \cite[Theorem 1]{Rothenberg} shows that
$M$ is locally identifiable if and only if the FIM is full rank at some $p_0$. If we assume additive Gaussian noise, then Equation \ref{eq:sloppinessFormula} implies that the rank $r_0$ of the FIM at $p_0$ is equal to the rank of the Jacobian of $\phi$ at $p_0$, and so for generic $p_0$, the dimension of the connected component of $p_0$ in its equivalence class is $r-r_0$ (cf discussion near  \cite[Equation 85]{cc-jjdIII:StructuralIdentifiabilityReview}). As one can compute the rank of the  FIM by computing the singular value decomposition and employing a sound threshold \cite{Donoho:2014}, the FIM can then be used to numerically determine the dimension of generic equivalence classes. Further approaches for giving probabilistic, and sometimes guaranteed bounds on identifiability using symbolic computation at specific parameters have been developed and applied in \cite{Anguelova2007,Sedoglavic2008,Anguelova2012}.
\end{rmk}
 
   The Taylor expansion \eqref{eq:infTildeD} shows that, for parameters very near $p_0$, the premetric $d$ is approximately given by the pseudometric $d_{\rm{FIM},p_0}$. Therefore, in a neighborhood of $p_0$, the map $\phi$ giving the model predictions is maximally sensitive to infinitesimal perturbations in the direction of the eigenvector of the maximal eigenvalue of the FIM at $p_0$, referred to as the \emph{stiffest} direction at $p_0$. The direction of the eigenvector of the minimal eigenvalue of the FIM at $p_0$, which gives the perturbation direction to which $\phi$ is minimally sensitive, is known as the \emph{sloppiest} direction at $p_0$.

\begin{defn}
Let $(M, \phi,\psi,d_2)$ be a mathematical model with a choice of model prediction map, a specific assumption on measurement noise, and the Euclidean metric as a reference metric on $P$. We say that $(M, \phi,\psi,d_2)$ is \emph{infinitesimally sloppy} at a parameter $p_0$ if there are several orders of magnitude between the largest and smallest eigenvalues of the FIM at $p_0$. We define the \emph{infinitesimal sloppiness at $p_0$} to be the condition number of the FIM at $p_0$, that is, the ratio between its largest and smallest eigenvalues.
\end{defn}

\begin{rmk}
First note that this definition is only meaningful when the FIM at $p_0$ is full rank. In this case, the condition number of the FIM at $p_0$ corresponds to the aspect ratio of the level curves of $d_{\rm{FIM},p_0}$, which is one way to quantify how far these level curves are from Euclidean spheres. Thus, using the condition number of the FIM as a quantification of sloppiness implies that the reference metric on $P$ is the Euclidean metric.
\end{rmk}

The FIM possesses attractive statistical properties. Suppose $(M, \phi,\psi,d_2)$ is locally identifiable and that maximum likelihood estimates exist generically, that is, for almost all $z$, there are parameters minimizing the negative log-likelihood:
$\hat{p}(z) = \min_{p \in P} (-\log \psi(p,z))$. Let $z\in Z$ be a generic data point and let $\hat{p}(z)$ be the unique maximum likelihood estimate. Suppose that the ``true'' parameter is $p_0$, that is, $z$ is a corruption of the model prediction $\phi(p_0)$. When the FIM at $p_0$ is invertible, the Cramer-Rao inequality \cite[Section 7.3]{gc-rlb:statsBook} implies that
\begin{align}
[\left(\nabla_p^2d(p,p_0)\right)|_{p=p_0} ]^{-1} \preceq \operatorname{Cov}_{p_0} \hat{p}(z), \label{eq:crLB}
\end{align}
where
\vspace{-0.2cm}
\begin{align*}
 \operatorname{Cov}_{p_0} \hat{p}(z) :=& \int_{Z} \hat{p}(z)\hat{p}^T(z) \psi(p_0,z) \ dz\\
                                  &\hspace{1.5cm}- \left(\int_{Z} \hat{p}(z) \psi(p_0,z) \ dz\right) \left(\int_{Z}  \hat{p}(z) \psi(p_0,z) \ dz \right)^T
\end{align*}
is the covariance of the maximum likelihood estimate with respect to measurement noise, and $A\preceq B$ if and only if $B-A$ is positive semi-definite. This inequality provides an explicit link between the uncertainty associated with parameter estimation and the geometry of the negative log likelihood.  Meanwhile, the sensitivity of $\phi$ is related to the uncertainty associated with parameter estimation via  \eqref{eq:crLB}. The asymptotic normality of the maximum likelihood estimates implies that the Cramer-Rao inequality \eqref{eq:crLB} tends to \emph{equality} as $K$ tends to infinity \cite[Section 10.7]{gc-rlb:statsBook}. Formally,
\begin{align}
\lim_{K \to \infty} [\left(\nabla_p^2d(p,p_0)\right)|_{p=p_0}]^{-1} =   \operatorname{Cov}_{p_0} \hat{p}(z). \label{eq:sloppinessValid}
\end{align}
The list of regularity conditions required for \eqref{eq:crLB} and \eqref{eq:sloppinessValid} to hold are provided in  \cite[Section 7.3]{gc-rlb:statsBook}, and are easily satisfied in practice.

\begin{rmk}
A sufficient condition for $d_{\rm{FIM},p_0}$ to be a good approximation for the premetric $d$ on a neighborhood of $p_0$ is to have a very large number of replicates. In practice, however, questions of cost and time mean that the number of replicates is often very small. Accordingly, the sloppiness literature generally assumes the number of experiments is one ($K=1$), though the effect of increasing experimental replicates in mitigating sloppiness has been explored in \cite{Apgar:2010di}.
\end{rmk}

\begin{eg}[Fitting points to a line]\label{eg-Line4} 
We revisit once more the model first considered in Example \ref{eg-Line1}. We consider the model prediction map obtained by evaluating at timepoints $t_1=0$ and $t_2=1$ as in Example \ref{eg-Line3}. We assume that we are in the presence of additive Gaussian error with covariance matrix $\Sigma=I_2$ equal to the identity matrix as discussed in Examples \ref{eg-AdditiveGaussianNoise1} and \ref{eg-AdditiveGaussianNoise2}. As in Example \ref{eg-AdditiveGaussianNoise1} the premetric $d$ is induced by half the square of the Euclidean distance on the data space $\RR^2$. We can explicitly determine $d$:
\begin{align*}
d((a_0,a_1),(a'_0,a'_1))&=\frac{1}{2}\left( (a_0-a'_0)^2+(a_0+a_1-a'_0-a'_1)^2\right)\\
                                                &=\frac{1}{2}\left(2(a_0-a'_0)^2+2(a_0-a'_0)(a_1-a'_1)+(a_1-a'_1)^2\right)\\
                                                &=\frac{1}{2} \left\langle \begin{pmatrix} a_0 -a'_0 \\ a_1-a'_1\end{pmatrix}, \begin{pmatrix} 2 & 1\\1 &1\end{pmatrix}\begin{pmatrix}a_0-a'_0\\a_1-a'_1\end{pmatrix}\right\rangle.
\end{align*}
We see that $d$ itself is a weighted sum of squares given by a positive definite matrix, and so $d$ is a metric. As the positive definite matrix giving this sum of squares is constant throughout parameter space, it follows that the sloppiness of the model is also constant throughout parameter space. Note that the same phenomenon would happen for any model such that the model manifold is given by an injective linear map (see Proposition \ref{prop:LinearPredictions} below). In particular, the same situation would arise when considering the problem of fitting points to any polynomial curve, as the corresponding model prediction map will be linear.

We next compute the FIM. The map $d(\cdot,(b_0,b_1))$ is given by 
\begin{align*}
d((a_0,a_1),(b_0,b_1))=\frac{1}{2}\left( (b_0-a_0)^2+(b_0+b_1-a_0-a_1)^2\right)
\end{align*}
and so its Hessian, that is, the FIM is

\begin{align*}
&\frac{1}{2}\begin{pmatrix}
\frac{\partial^2}{\partial a_0^2} d((a_0,a_1),(b_0,b_1))             & \frac{\partial^2}{\partial a_0\partial a_1}d((a_0,a_1),(b_0,b_1))\\
 \frac{\partial^2}{\partial a_1\partial a_0} d((a_0,a_1),(b_0,b_1)) & \frac{\partial^2}{\partial a_0^2} d((a_0,a_1),(b_0,b_1)) 
\end{pmatrix} =\begin{pmatrix}
2 & 1\\
1 & 1\end{pmatrix}
\end{align*}
We conclude that in this case, the pseudometric $d_{\text{FIM},(a_0,a_1)}$ coincides with $d$ on the entire parameter space, which we will see in Proposition \ref{prop:LinearPredictions} is a consequence of the linearity of the model prediction map.
\done\end{eg}

\begin{prop}\label{prop:LinearPredictions}
Let $(M,\phi, \mathcal{N}(\phi(p),\Sigma),d_2)$ be a mathematical model with parameter space $P\subseteq\RR^r$, a choice of model prediction map, additive Gaussian noise with covariance matrix $\Sigma$, and the Euclidean metric as a reference metric. If the model prediction map $\phi\colon P\to\RR^N$ is linear, then $d_{\text{FIM},p_0}=d$ for all $p_0\in P$.
\end{prop}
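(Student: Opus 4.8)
The plan is to feed a linear $\phi$ into the closed-form expression for the premetric obtained in Example~\ref{eg-AdditiveGaussianNoise1}, observe that $d$ then becomes a base-point-independent quadratic form, and match it against the definition of $d_{\mathrm{FIM},p_0}$.

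First I would write $\phi(p)=Ap$, where $A$ is the $N\times r$ matrix representing the linear map $\phi$; note that if $\phi$ were merely affine, $\phi(p)=Ap+b$, the constant $b$ would cancel in every difference below, so the argument is unaffected. Substituting into formula \eqref{eq:gaussianTildeD} and using $\phi(p)-\phi(p')=A(p-p')$ gives
\[
d(p,p') \;=\; \frac{K}{2}\,\big\langle A(p-p'),\,\Sigma^{-1}A(p-p')\big\rangle \;=\; \frac{K}{2}\,\big\langle p-p',\,A^{T}\Sigma^{-1}A\,(p-p')\big\rangle ,
\]
a quadratic form in $p-p'$ whose matrix $K\,A^{T}\Sigma^{-1}A$ does not depend on the base point. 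In particular $d(\cdot,p_0)$ is a quadratic polynomial, hence smooth, so the regularity hypotheses needed for the Taylor expansion \eqref{eq:infTildeD} are automatically satisfied, and the $\mathcal{O}$-term there vanishes identically.

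Next I would compute the FIM at an arbitrary $p_0$: since $d(p,p_0)=\tfrac{K}{2}\langle p-p_0,\,A^{T}\Sigma^{-1}A\,(p-p_0)\rangle$, its Hessian is the constant matrix $\left(\nabla_p^2 d(p,p_0)\right)|_{p=p_0}=K\,A^{T}\Sigma^{-1}A$, independent of $p_0$. Plugging this into the definition of $d_{\mathrm{FIM},p_0}$ yields
\[
d_{\mathrm{FIM},p_0}(p,p') \;=\; \frac{1}{2}\big\langle p-p',\,K\,A^{T}\Sigma^{-1}A\,(p-p')\big\rangle \;=\; d(p,p'),
\]
which is the assertion.

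I do not expect a genuine obstacle here: the whole point is that a linear prediction map makes $d$ itself a (base-point-independent) quadratic form, so its second-order Taylor approximation is exact. The only things to be careful about are bookkeeping the replicate factor $K$, and the case where $\Sigma$ is only positive semidefinite, in which $\Sigma^{-1}$ should be read as a pseudoinverse throughout; neither affects the equality.
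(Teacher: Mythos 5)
Your proposal is correct and follows essentially the same route as the paper's proof: substitute $\phi(p)=Ap$ into the Gaussian formula \eqref{eq:gaussianTildeD} to see that $d$ is the base-point-independent quadratic form $\frac{K}{2}\langle p-p', A^{T}\Sigma^{-1}A(p-p')\rangle$, compute the (constant) Hessian, and compare with the definition of $d_{\mathrm{FIM},p_0}$. Your bookkeeping of the Hessian as $K\,A^{T}\Sigma^{-1}A$ is in fact the careful version of the paper's final displayed line, and your side remarks on affine maps and semidefinite $\Sigma$ are harmless additions.
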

\begin{proof}
Our assumption that $\phi$ is linear implies that there is a $N\times r$ matrix $A$ with real entries such that $\phi(p)=Ap$. By the discussion in Example \ref{eg-AdditiveGaussianNoise1}, we have
\begin{align*}
d(p',p)&=\frac{K}{2}\Big\langle(Ap'-Ap),\Sigma^{-1}(Ap'-Ap)\Big\rangle\\
          &=\frac{K}{2}\Big\langle(A(p'-p)),\Sigma^{-1}(A(p'-p))\Big\rangle\\
          &=\frac{K}{2}\Big\langle(p'-p),(A^T\Sigma^{-1} A)(p'-p)\Big\rangle.\\
\end{align*}
On the other hand the FIM is given by
\begin{align*}
\nabla_p^2d(p,p_0) &= \nabla_p^2 \left( -\frac{NK}{2}\log(2\pi)+\frac{K}{2}\log(|\Sigma|)+\frac{K}{2}\Big\langle(Ap_0-Ap), \Sigma(Ap_0-Ap)\Big\rangle\right)\\
                                                               &=\frac{K}{2} \nabla_p^2 \left(\Big\langle(Ap_0-Ap),\Sigma^{-1}(Ap_0-Ap)\Big\rangle\right)\\
                                                               &= \frac{K}{2} \nabla_p^2 \left(\Big\langle(Ap),\Sigma^{-1}(Ap)\Big\rangle\right)\\
                                                               &=\frac{K}{2}A^T\Sigma^{-1} A,
\end{align*}
completing the proof.
\end{proof}

\begin{eg}[Linear parameter-varying model]\label{eg-linparvar}
We consider a standard model arising in control theory, which falls under the case of real analytic time 
dependent models. Specifically, we consider models of the form
\begin{alignat*}{3}
&\dot{x} &=& A(p)x,\\ 
&y &=& Cx, \\
&x(0) &= &x_0,
\end{alignat*}
where $A(p)$ is a $m\times m$ matrix with polynomial dependence on the parameter $p\in \RR^{r-1}$, $C$ is a known fixed $n\times m$ matrix with real coefficients, and $y$ is the measurable output. Note that $y$ depends on the initial condition $x_0$, which we will consider as an extension of parameter space. We assume further that $A(p)$ is Hurwitz for all $p$ considered. We denote by $y(t,(p,x_0))$ the output of the system at time $t$, given the parameter $(p,x_0)$.   If we measured the system at a finite number of time-points, assuming Gaussian noise-corruption, then the distance function $d((p,x_0),(p',x'_0))$ would be the Euclidean distance between the model predictions at the chosen set of timepoints. 

For any pair of parameters $(p,x_0)$ and $(p',x'_0)$, the following integral can be explicitly computed and is a rational function of $(p,x_0)$ and $(p',x'_0)$ (cf \cite[Theorem 1]{Raman2016}, which assumes that $A(p)$ is linear in the parameters, but whose proof holds more generally):
\begin{align*}
d_{\infty}((p,x_0),(p',x'_0)) := \int_{0}^{\infty} \|y(t,(p,x_0)) - y(t,(p',x'_0))\|_2^2   \ dt.
\end{align*}
Note that $d_{\infty}((p,x_0),(p',x'_0))$ is equal to the $L^2$ norm of the function $y(t,(p,x_0)) - y(t,(p',x'_0))$, and so $d_{\infty}((p,x_0),(p',x'_0))=0$ if and only if $y(t,(p,x_0))=y(t,(p',x'_0)$ for almost all $t$. As $y$ is real-analytic, it then follows that $y(t,(p,x_0))=y(t,(p',x'_0)$ for all $t$. Therefore $d_{\infty}((p,x_0),(p',x'_0))=0$ if and only if $(p',x'_0)\sim\!\!_{M,z}(p,x_0)$, and so the equivalence class of $(p,x_0)$ is given by the zeros $(p',x'_0)$ of the rational function $d_{\infty}((p,x_0),(p',x'_0))$. \done
\end{eg}


\subsection{Multiscale sloppiness}

We now present a quantification of sloppiness that holds for non-Euclidean reference metric and is better suited to the presence of noninfinitesimal noise. In this section, we sometimes make the assumption that for generic $p_0\in P$, there is a neighborhood of $p_0$ where the reference metric $d_P$ is  strongly equivalent to the Euclidean metric inherited by $P$ as a subset of $\RR^r$. The BHV metric \cite{BHV:Treespace2001} mentioned at the beginning of the section satisfies this property.

In Section \ref{subsection:inf-slop} we saw how the FIM approximates the premetric $d$ in the limit of decreasing magnitude of parameter perturbation, which is realizable in the limit of increasing experimental replicates or sample size. In a practical context, however the limit of increasing replicates may not be valid. Indeed examples are provided in \cite{keh-trm-rwa:nonlinIdent} and \cite{mf-asm-ak:bootstrap} of models for which the uncertainty of parameter estimation is poorly approximated by the FIM. Even when the approximation is valid, numerical errors in sloppiness quantification are often significant, due to the ill-conditioning of the FIM \cite{Vallisneri2008}. We describe a second approach called  \emph{multiscale sloppiness} introduced in \cite{dvr-ja-ap:SloppinessDhruva} for models given by ODE systems with time series data under the assumption of additive Gaussian noise and with the standard Euclidean metric as a reference metric. We extend this quantification of sloppiness to a more general setting.

\begin{defn}[Multiscale Sloppiness]
Consider a model $(M,\phi,\psi,d_P)$ with a choice of model prediction map $\phi$, a specific assumption on measurement noise, and a choice of reference metric on $P$. We define the \emph{$\delta$-sloppiness at $p_0$} to be
\begin{align*}
& \mathcal{S}_{p_0}(\delta)  := \frac{\sup_{p \in P} \{ d(p,p_0) \mid d_P(p,p_0) = \delta\}}{\inf_{p \in P} \{ d(p,p_0) \mid d_P(p,p_0) = \delta\}}
\end{align*}

If $d_P$ is strongly equivalent to the Euclidean metric on a neighborhood of $p_0$, then for $\delta$ sufficiently small, the (non-unique) \emph{maximally} and \emph{minimally} disruptive parameters at length scale $\delta$ at the point $p_0\in P$ are the elements of the sets
\begin{align}
&D^{max}_{p_0}(\delta) = \arg\max_{p \in P} d(p,p_0) : d_P(p,p_0) = \delta \label{eq:maxDisrupt} \\
&D^{min}_{p_0}(\delta) = \arg\min_{p \in P} d(p,p_0) : d_P(p,p_0) = \delta, \label{eq:minDisrupt}
\end{align}
respectively. 
In this case, the $\delta$-sloppiness at $p_0$ is
\begin{align}
& \mathcal{S}_{p_0}(\delta)  = \frac{d(p_{p_0}^{max}(\delta),p_0)}{d(p_{p_0}^{min}(\delta),p_0)},
\end{align}
where $p_{p_0}^{max}(\delta)\in D^{max}_{p_0}(\delta)$ and $d(p_{p_0}^{min}(\delta)\in D^{min}_{p_0}(\delta)$.
\end{defn}

Note that since the set $\{ d(p,p_0) \mid p\in P \text{ and } d_P(p,p_0) = \delta\}$ is a closed set of real numbers with a lower bound (zero), the infimum is actually a minimum, hence  $D^{min}_{p_0}(\delta)$ is always well-defined.

\begin{rmk}
Computation of $\delta$-sloppiness would seem to require the solution of a (possibly nonlinear, nonconvex) optimization program for each $\delta >0$. However, assuming the reference metric on parameter space is the Euclidean distance and that we are in the presence of additive Gaussian noise, finding $p^{min}_{p_0}(\delta)\in D^{min}_{p_0}(\delta)$ for continuous ranges of $\delta$ can be formulated as the solution of an optimal control problem relying on solving a Hamiltonian $dH/dp = 0$ as described in \cite[Section 5]{dvr-ja-ap:SloppinessDhruva}. With this method, computation of $\delta$-sloppiness is possible for large, nonlinear systems of ODE. Note that this formulation as an optimal control problem does not fundamentally rely on the assumption of a Euclidean metric on parameter space, and so the principle likely applies to more general classes of metric.
\end{rmk}

If we choose the usual Euclidean distance as the reference metric on parameter space, then as the length-scale $\delta$ goes to zero, infinitesimal and multiscale sloppiness coincide:
\begin{align*}
 \lim_{\delta \to 0} \mathcal{S}_{p_0}(\delta) = \frac{\lambda^{max}\left(\nabla_p^2d(p,p_0)\right)|_{p=p_0}}{\lambda^{min}\left(\nabla_p^2d(p,p_0)\right)|_{p=p_0}},
\end{align*}
where $\lambda^{max}$ and $\lambda^{min}$ denote the maximal and minimal eigenvalues of their argument. Indeed, the Taylor expansion (\ref{eq:infTildeD}) implies that as $p$ approaches $p_0$, $d(p,p_0)$ approaches $d_{\rm{FIM},p_0}(p,p_0)$, and so the level sets $\{p\in P \mid d(p,p_0)=\delta \} $ tend to the level sets $\{p \in P \mid d_{\rm{FIM},p_0}(p,p_0)=\delta\}$ as $\delta$ goes to zero.

Multiscale sloppiness, or more precisely the denominator of $\mathcal{S}_{p_0}(\delta)$, is closely related to structural identifiability:
\begin{thm}
Let $(M,\phi,\psi,d_P)$ be a mathematical model with a choice of model prediction map, a specific assumption on measurement noise, and a choice of reference metric $d_P$, which we assume is strongly equivalent to the Euclidean metric. The equivalence class $[p_0]_{\sim\!\!_{M,\phi}}$ of the parameter $p_0$ has size one if and only if $\inf_{p \in P} \{ d(p,p_0) \mid d_P(p,p_0) = \delta\} > 0$ for all $\delta>0$.
\end{thm}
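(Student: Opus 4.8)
The plan is to reduce everything to the single fact established earlier in this section: the premetric $d$ vanishes exactly on equivalence classes, i.e. $d(p,p_0)=0$ if and only if $\phi(p)=\phi(p_0)$, equivalently $p\sim\!\!_{M,\phi}p_0$. In other words, the zero set of $d(\cdot,p_0)$ is precisely $[p_0]_{\sim\!\!_{M,\phi}}$. Everything else is bookkeeping with this observation together with the fact that $d_P$ is a genuine metric (so $d_P(p,p')=0$ forces $p=p'$).

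For the direction ``size one $\Rightarrow$ infimum positive'', I would fix $\delta>0$ and argue by contradiction. By the note immediately preceding the theorem, the set of values $\{d(p,p_0)\mid d_P(p,p_0)=\delta\}$ is closed in $\RR$ and bounded below by $0$, so its infimum is attained: there is $p^*\in P$ with $d_P(p^*,p_0)=\delta$ realising it. If that infimum were $0$, then $d(p^*,p_0)=0$, hence $p^*\sim\!\!_{M,\phi}p_0$, hence $p^*\in[p_0]_{\sim\!\!_{M,\phi}}=\{p_0\}$, forcing $p^*=p_0$ and therefore $d_P(p^*,p_0)=0\ne\delta$, a contradiction. (If the reference sphere $\{p\in P\mid d_P(p,p_0)=\delta\}$ happens to be empty, the infimum is $+\infty$ and there is nothing to check.) Hence the infimum is strictly positive for every $\delta>0$.

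For the converse I would argue by contraposition: suppose $[p_0]_{\sim\!\!_{M,\phi}}$ contains some $p_1\ne p_0$. Then $d(p_1,p_0)=0$ because $p_1\sim\!\!_{M,\phi}p_0$, while $\delta_1:=d_P(p_1,p_0)>0$ because $d_P$ is a metric. Since $p_1$ is then an admissible competitor for the infimum at scale $\delta_1$, we get $0\le\inf_{p\in P}\{d(p,p_0)\mid d_P(p,p_0)=\delta_1\}\le d(p_1,p_0)=0$, so the infimum fails to be positive at $\delta=\delta_1$; thus it is not positive for all $\delta>0$.

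The argument is short once the preceding note is granted, and I expect the only genuinely load-bearing point to be exactly that note: that the infimum defining the minimally disruptive parameters is attained. Citing it here is legitimate. If instead one wanted to re-derive attainment inside this proof, the obstacle would be compactness of the reference sphere $\{d_P(\cdot,p_0)=\delta\}$ inside $P$; this is where strong equivalence of $d_P$ with the Euclidean metric (and closedness of $P$ in $\RR^r$) would enter, allowing one to replace a minimizing sequence by a convergent subsequence and pass to the limit using continuity of $d$ and $d_P$.
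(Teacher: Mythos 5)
Your proof is correct and follows essentially the same route as the paper's: both directions reduce to the fact that $d(\cdot,p_0)$ vanishes exactly on $[p_0]_{\sim\!\!_{M,\phi}}$, the forward direction invoking the attainment of the infimum noted just before the theorem and the converse exhibiting an equivalent parameter at positive reference distance. Your explicit handling of the empty-sphere case and your remark on where strong equivalence would be needed to justify attainment are minor refinements, not a different argument.
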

\begin{proof}
Suppose that the equivalence class $[p_0]$ of $p_0$ has size one, then for any other parameter $p$, we will have $d(p,p_0)>0$. In particular, this will hold for $p^{min}_{p_0}(\delta)\in D^{min}_{p_0}(\delta)$, for any $\delta>0$. Hence $\inf_{p \in P} \{ d(p,p_0) \mid d_P(p,p_0) = \delta\}=d(p^{min}_{p_0}(\delta),p_0)>0.$

Suppose on the other hand that $\inf_{p \in P} \{ d(p,p_0) \mid d_P(p,p_0) = \delta\}  > 0$ for all $\delta>0$ and suppose, for a contradiction that $p\in[p_0]$ is distinct from $p_0$. Set $\delta':=d_P(p,p_0)$. As $p\neq p_0$, we have  $\delta'>0$ and 
\begin{align*}
0=d(p,p_0)\geq \inf_{p \in P} \{ d(p,p_0) \mid d_P(p,p_0) = \delta'\},
\end{align*}
which is a contradiction, since $\inf_{p \in P} \{ d(p,p_0) \mid d_P(p,p_0) = \delta'\}  > 0$.
\end{proof}

\begin{eg}[Sum of exponentials]\label{eg:SumExpInfDelta}

We highlight that sloppiness is a local property: it depends on the point in parameter space and the precise choice of timepoints. In this spirit, let us revisit Example \ref{eg-SumExponentials1}, again adding Gaussian measurement noise with identity covariance and taking the model prediction map to be evaluating at timepoints $\{1/3,1,3\}$. We are in the situation considered in Example \ref{eg-AdditiveGaussianNoise1} and so $d$ is again half the squared Euclidean distance between model predictions:
\vspace{-0.2cm}
\begin{align*}
d\left( \ (a,b), (a',b') \ \right) = \frac{1}{2} \|\phi(a,b) - \phi(a',b')\|_2^2,
\end{align*}
where
\vspace{-0.2cm}
\begin{align*}
\phi(a,b)  = (e^{-\nicefrac{a}{3}}+e^{-\nicefrac{b}{3}} ,e^{-a}+e^{-b},e^{-3a}+e^{-3b}).
\end{align*}
The Jacobian of the model prediction map at $(a_0,b_0)$ is therefore given as
\begin{align*}
(\nabla_{a,b}\phi(a,b))|_{(a,b)=(a_0,b_0)} = \begin{pmatrix}
-\frac{1}{3} e^{-\nicefrac{a_0}{3}}    & - e^{-a_0} & -3 e^{-3a_0} \\  -\frac{1}{3} e^{-\nicefrac{b_0}{3}}    & - e^{-b_0} & -3 e^{-3b_0}
\end{pmatrix}
\end{align*}
The FIM at $(a_0,b_0)$, in this case, will be given as 
\begin{align*}
\begin{pmatrix}
\frac{1}{9} e^{-\nicefrac{2a_0}{3}} + e^{-2a_0} + 9 e^{-6a_0} & \frac{1}{9} e^{-\nicefrac{(a_0+b_0)}{3}} + e^{-(a_0+b_0)} + 9 e^{-3(a_0+b_0)} \\
\frac{1}{9} e^{-\nicefrac{(a_0+b_0)}{3}} + e^{-(a_0+b_0)} + 9 e^{-3(a_0+b_0)} & \frac{1}{9} e^{-\nicefrac{2b_0}{3}} + e^{-2b_0} + 9 e^{-6b_0} 
\end{pmatrix}
\end{align*}
We compute infinitesimal sloppiness and $\delta$-sloppiness of $(M,\phi)$ at $p_0=(a,b)=(4, 1/8)$  in Figure~\ref{fig:multiscale}: the difference between these notions of sloppiness becomes clear.

 \begin{figure}[h!]
\includegraphics[width=5.75cm]{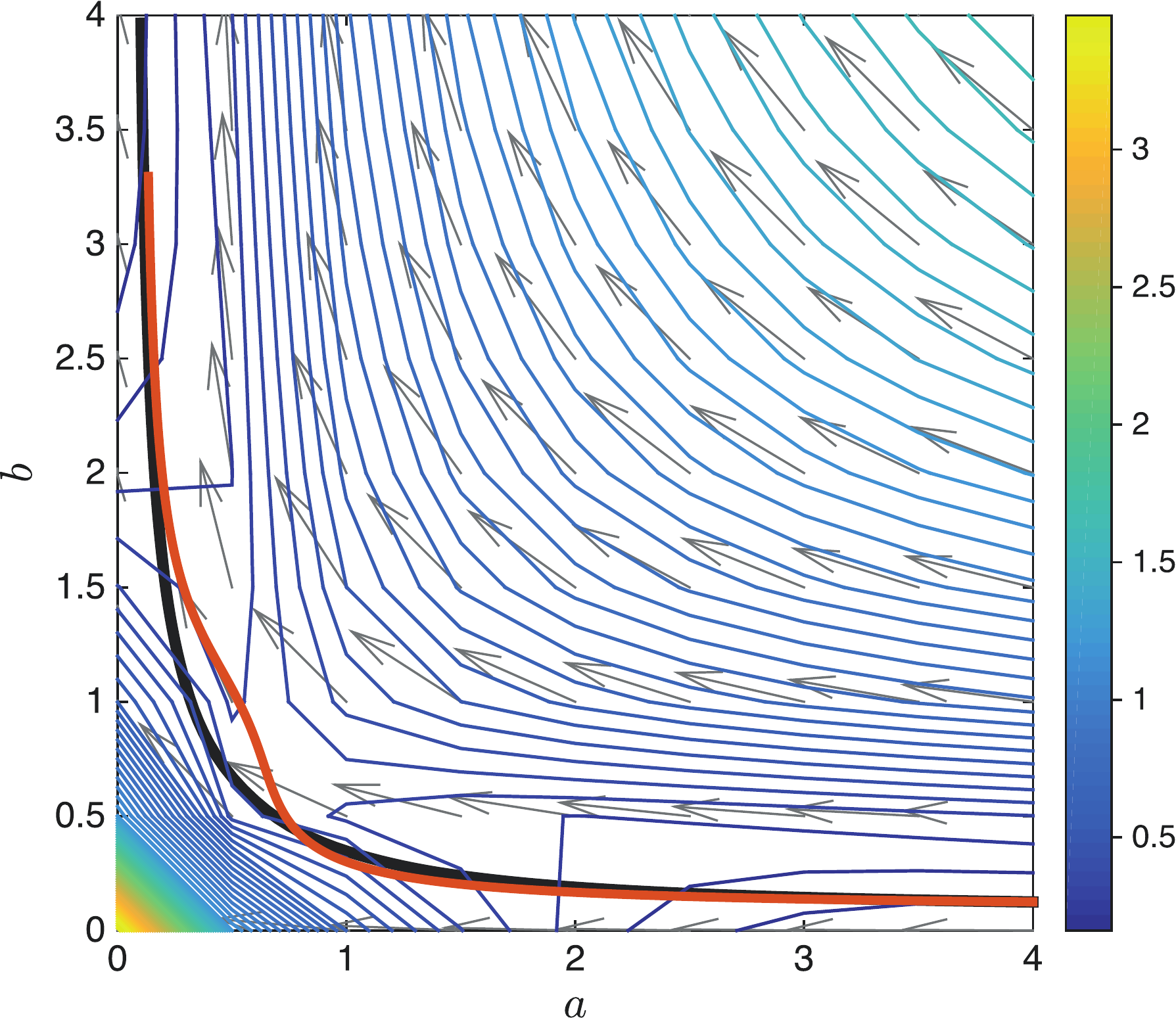}
\caption{Infinitesimal sloppiness vs $\delta$-sloppiness of sum of exponential (Example \ref{eg:SumExpInfDelta}). For a given parameter $p_0 = (a = 4, b = 1/8)$ and time points $\{1/3, 1, 3\}$,  level sets of $d$ are drawn (colors). The vector field consisting of the eigenvector corresponding to the largest eigenvalue of the FIM is plotted across the grid. We compare the flow of this vector field initialised at $p_0$ (gray curve), with the most delta-sloppy parameters with respect to $p_0$ over a range of $\delta$ (orange curve). }
\label{fig:multiscale}
\end{figure}

Figure~\ref{fig:VaryingTimepoints} illustrates how the change of model prediction map, in this case different choices of timepoints, changes the premetric $d$. This suggests that sloppiness should be taken into consideration when designing an experiment: some choices of timepoints will allow for better quality parameter estimation. Figure~\ref{fig:VaryingParameters}, on the other hand, illustrates how the premetric $d$ changes in parameter space. In particular, these two figures illustrate that unlike identifiability, sloppiness is not a global property of a model.\done

 \begin{figure}[h!]

\begin{tikzpicture}
\node    at  (-4.1,0) {\includegraphics[trim = 20mm 5mm 19mm 5mm, clip, width=4.5cm]{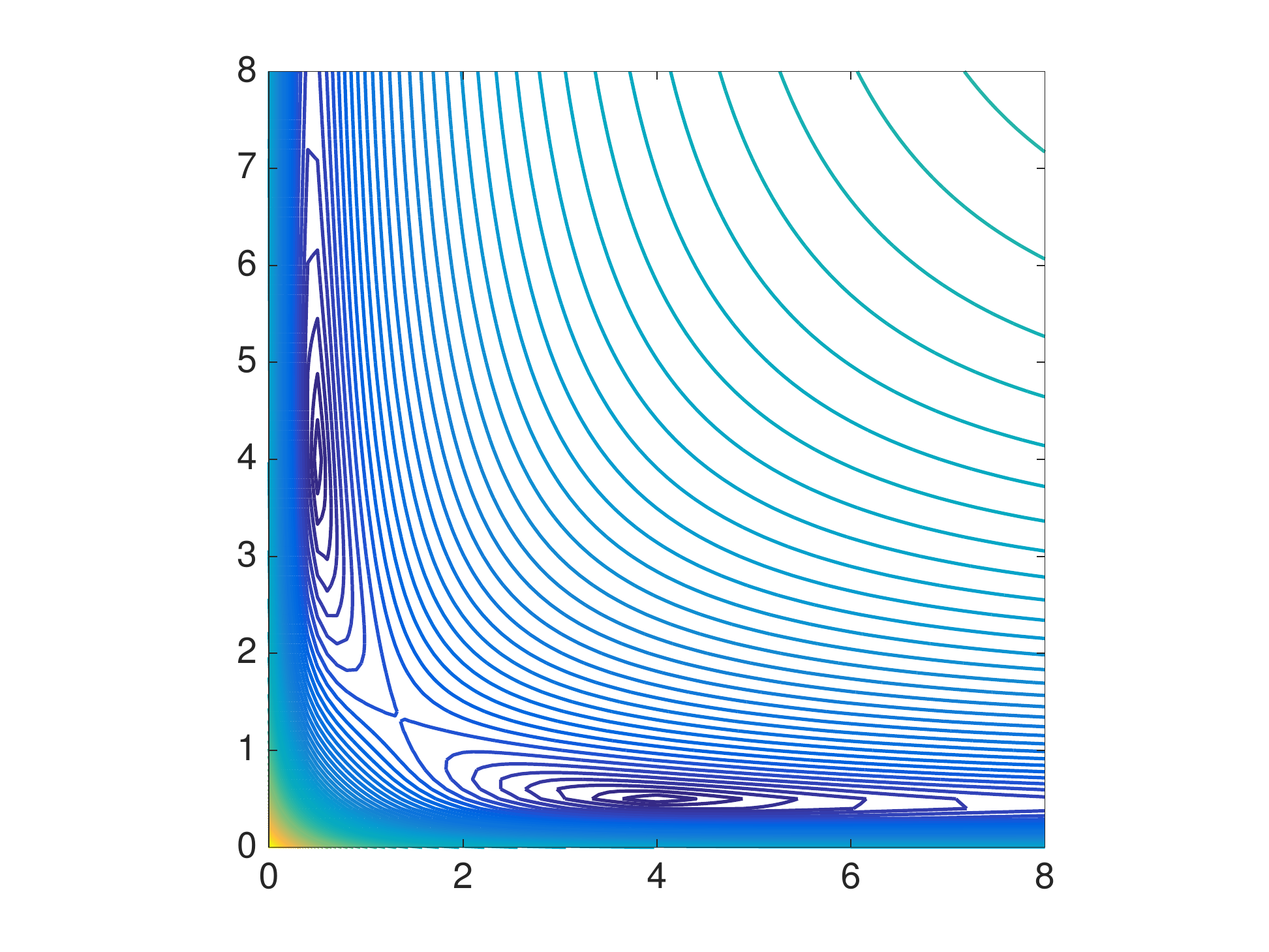}};

\node at (0,0) {\includegraphics[trim = 20mm 5mm 19mm 5mm, clip, width=4.5cm]{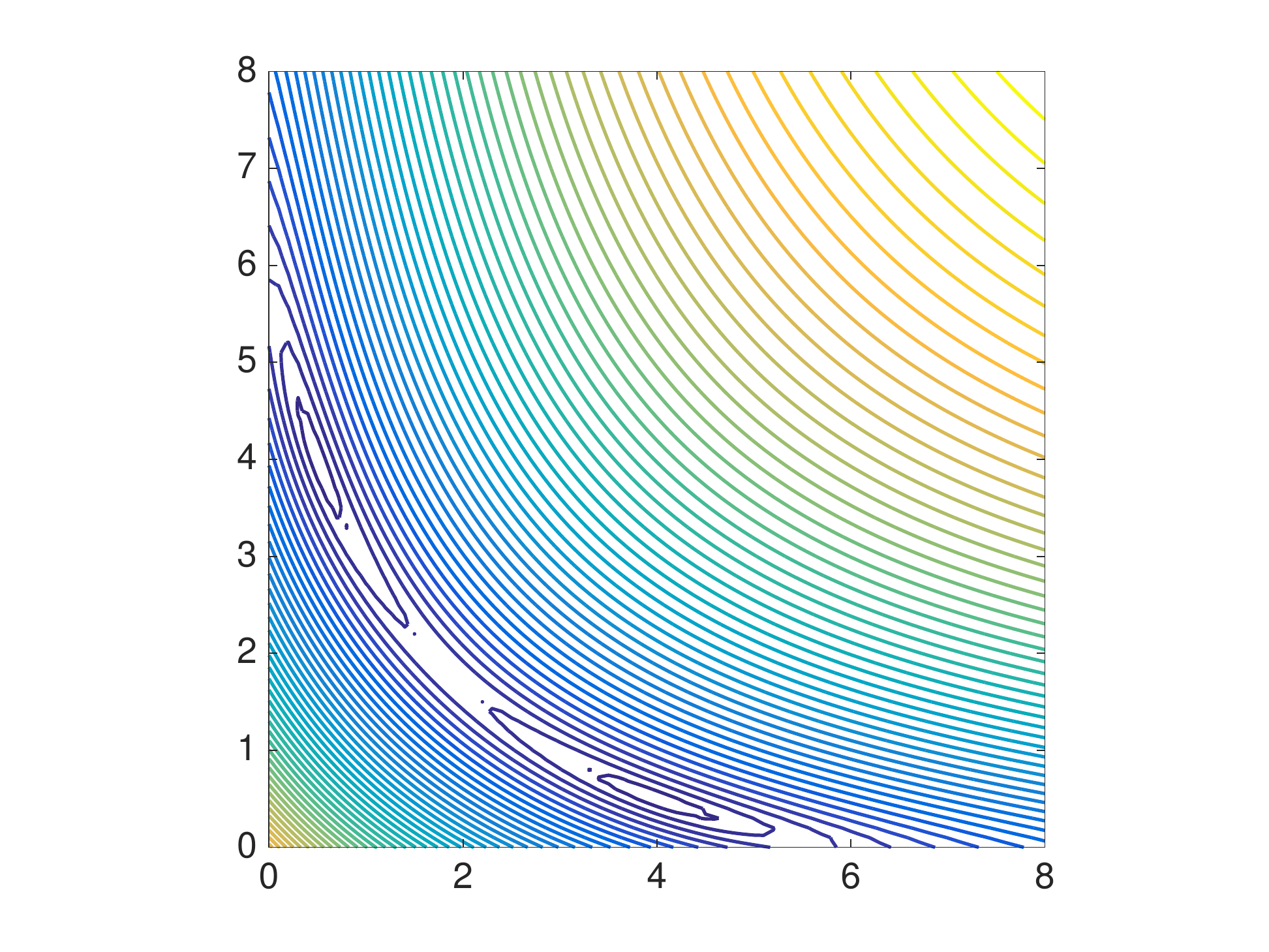}};

\node at (4.1,0) {\includegraphics[trim = 20mm 5mm 19mm 5mm, clip,  width=4.5cm]{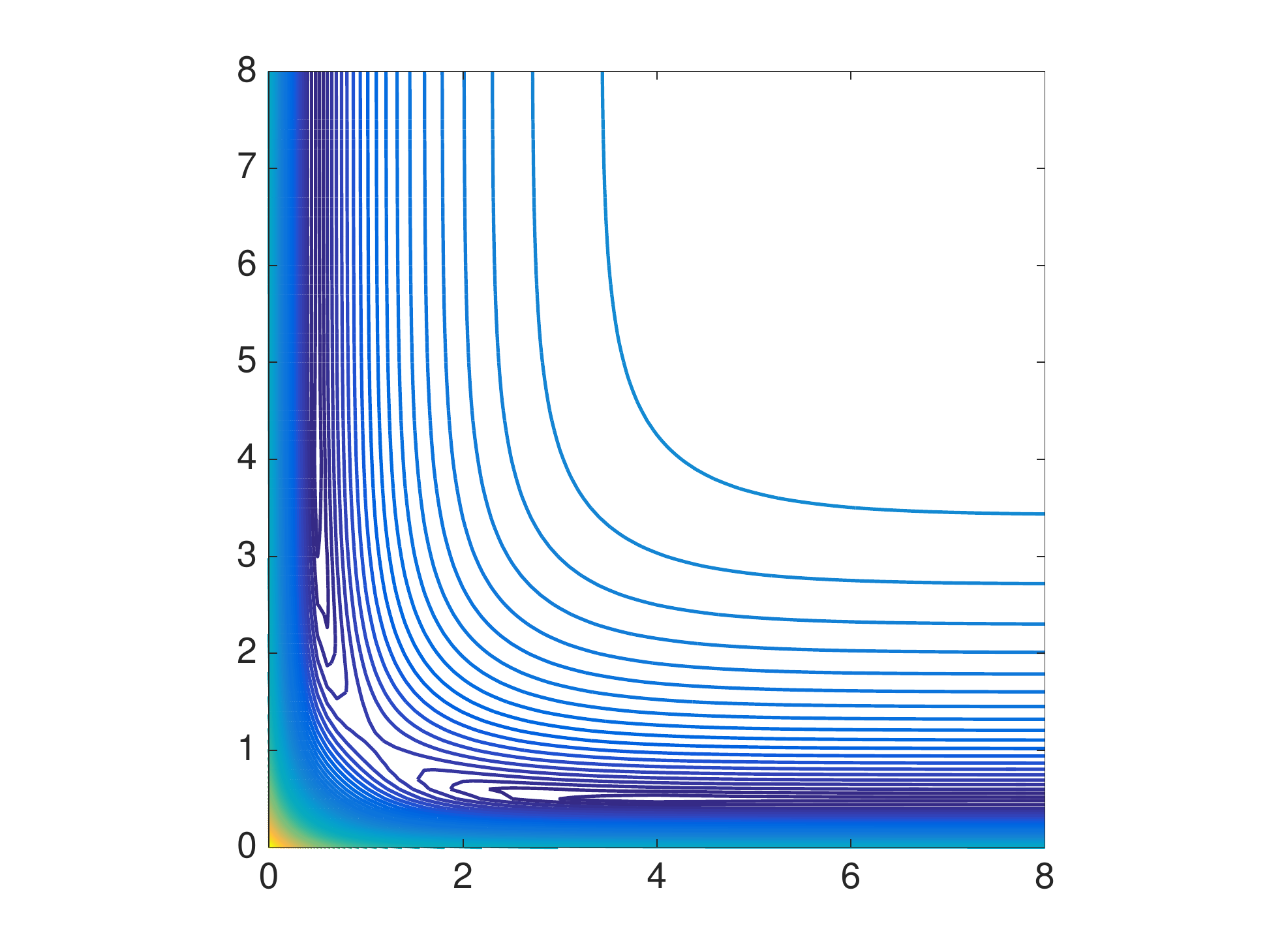}}; 
 \end{tikzpicture}

\caption{Sloppiness for different choices of model prediction map for the sum of exponentials (Example \ref{eg:SumExpInfDelta}). For a given parameter $p_0 = (4, 1/2)$, we draw the level curves of $\sqrt{d(\cdot,(4, 1/2))}$ for timepoints $\{1/3,1,3\}$ on the left, for timepoints $\{1/9,1/3\}$ in the center, and for timepoints $\{1,3\}$ on the right are shown.  Taking the square root changes the spacing of the level curves, but not on their shape.
 }
\label{fig:VaryingTimepoints}
\end{figure}

 \begin{figure}[h!]

\begin{tikzpicture}
\node    at  (-4.1,0) {\includegraphics[trim = 20mm 5mm 19mm 5mm, clip, width=4.5cm]{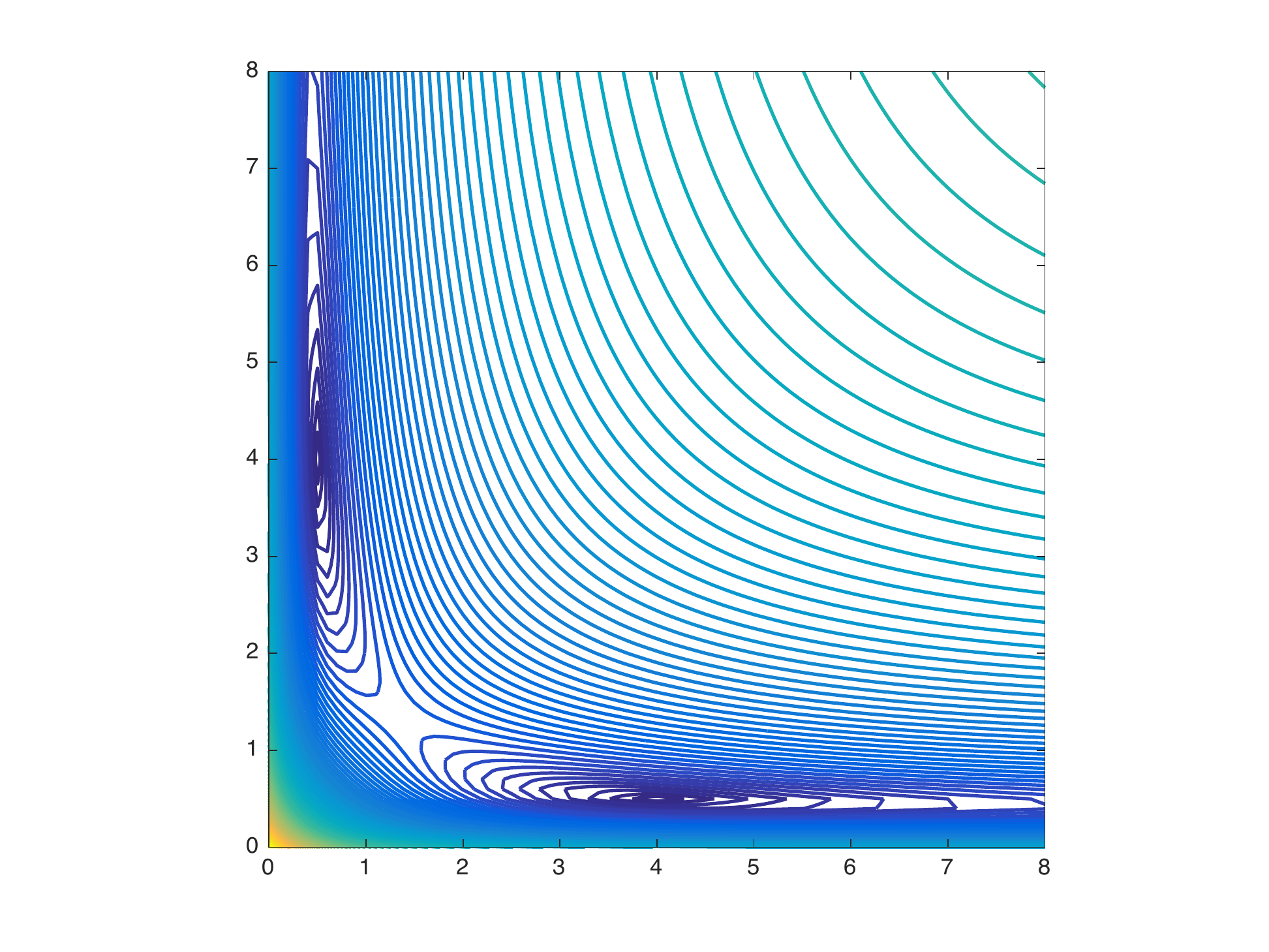}};

\node at (0,0) {\includegraphics[trim = 20mm 5mm 19mm 5mm, clip, width=4.5cm]{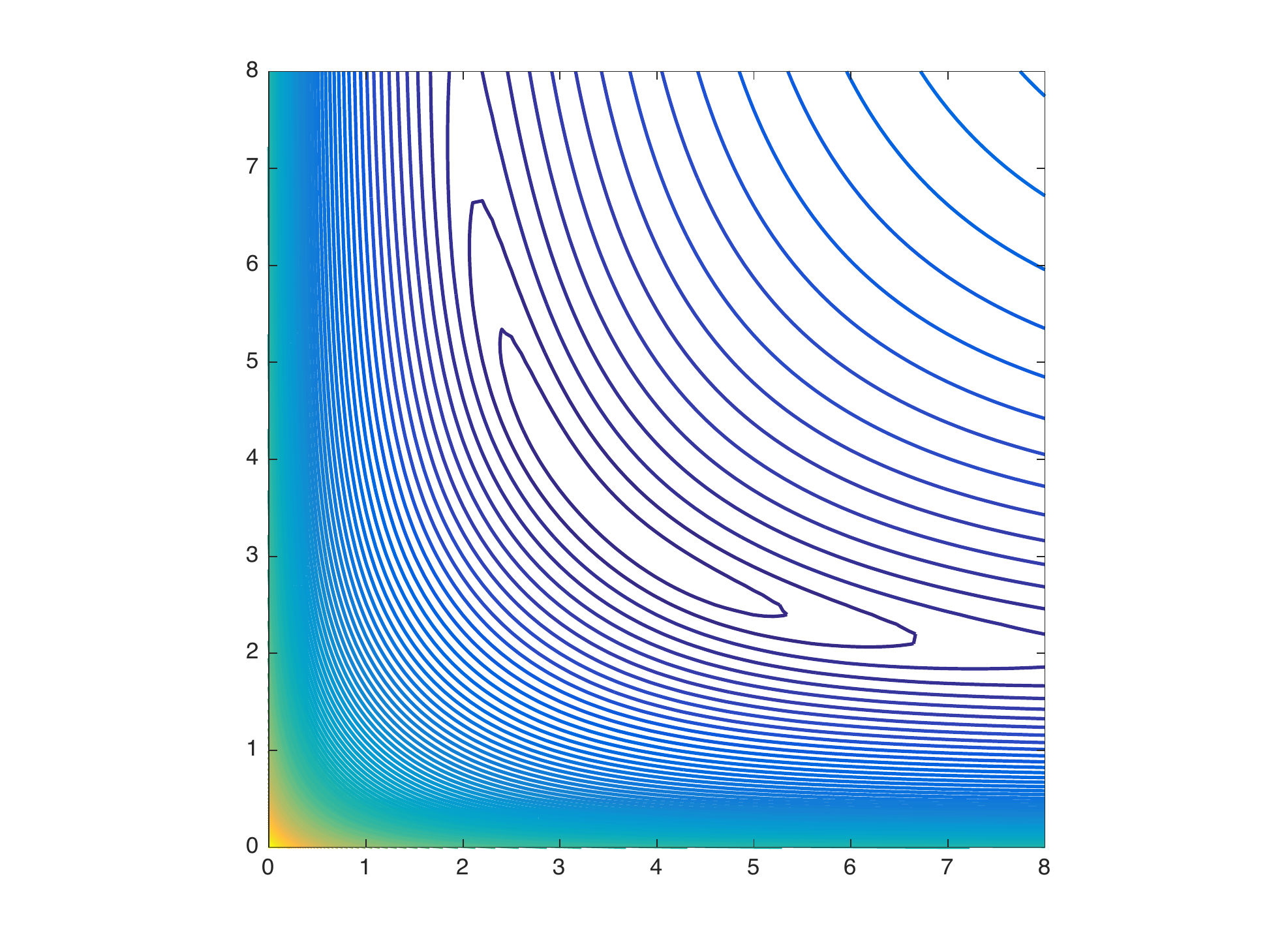}};

\node at (4.1,0) {\includegraphics[trim = 20mm 5mm 19mm 5mm, clip,  width=4.5cm]{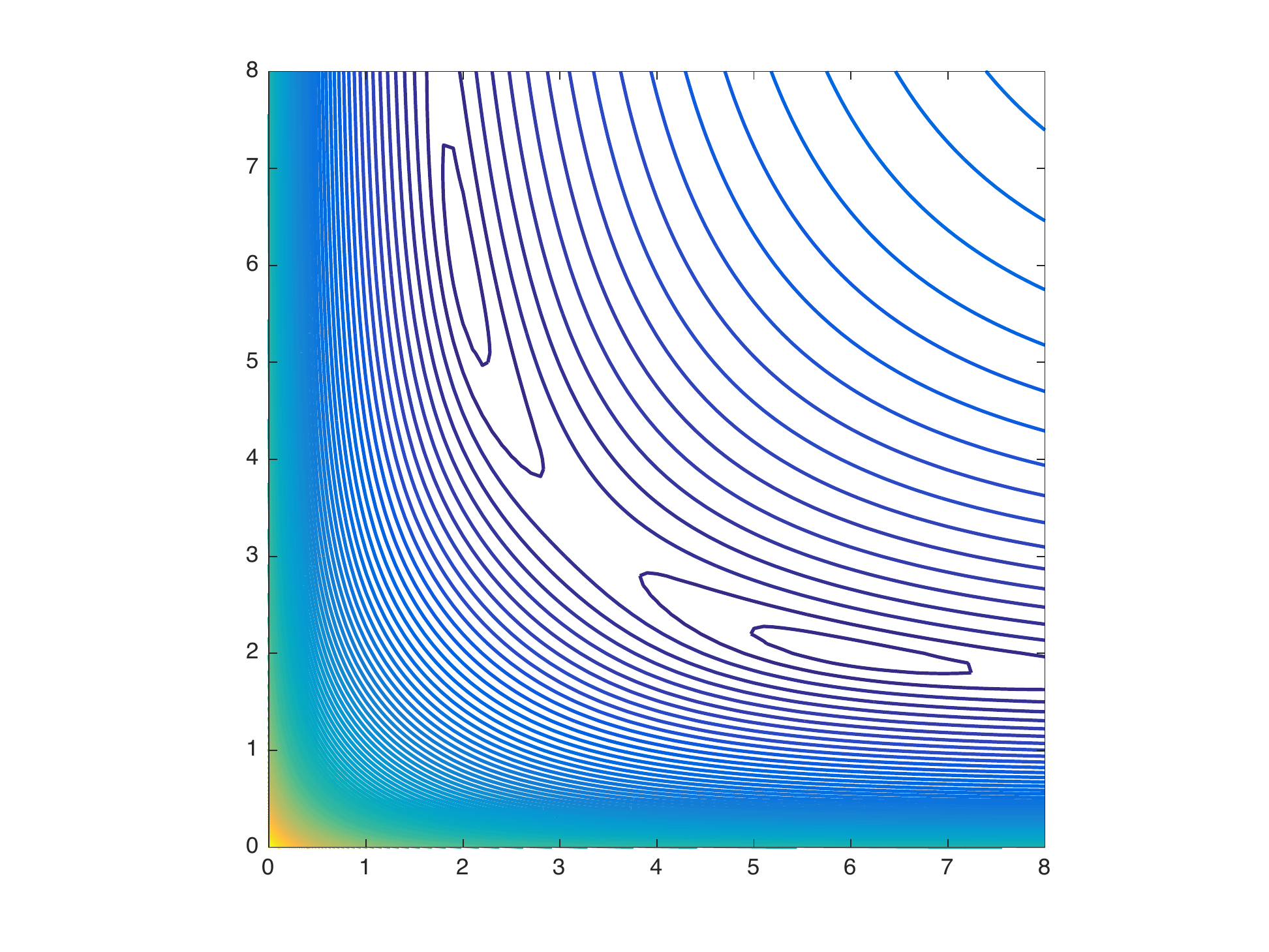}}; 
 \end{tikzpicture}

\caption{Sloppiness at different parameters given a choice of model prediction map for the sum of exponentials (Example \ref{eg:SumExpInfDelta}). With the model prediction map given by timepoints  $\{1/3,1,3\}$, we draw the level curves of $\sqrt{d(\cdot,(4, 1/2))}$  on the left, of $\sqrt{d(\cdot,(3, 3))}$ in the center, and of $\sqrt{d(\cdot,(6, 2))}$on the right are shown.  Taking the square root changes the spacing of the level curves, but not on their shape.
 }
\label{fig:VaryingParameters}
\end{figure}

\end{eg}

\subsection{Sloppiness and practical identifiability}

Determining the practical identifiability of a model corresponds to asking whether one can arrive to some estimate of the parameter from noisy data, that is, whether based on an assumption on measurement noise, noisy data constrains the parameter value to a bounded region of parameter space. Part of the literature uses the FIM in the manner of infinitesimal sloppiness to define practical identifiability(see for example \cite{Vajda1989,Chis2016:SloppinessIdentifiability}), but we will see in Example \ref{eg-NotSloppyNotPI} that this method of evaluating practical identifiability can lead to problems. We thus favor an approach more in line with Raue et al \cite{Raue2009}. 

Practical identifiability depends on the method used for parameter estimation. We focus on practical identifiability for maximum likelihood estimation, one of the most widely used methods for parameter estimation (see, for example \cite{ll:LjungSystemIdentification}). Accordingly, in the remaining of this section, we consider models $(M,\phi,\psi,d_P)$ with a choice of model prediction map, a specific assumption of the probability distribution of measurement noise and a choice of reference metric on $P$ such that maximum likelihood estimates exist for generic data.

For the noisy data point $z_0\in Z$, supposing the existence of a unique maximum likelihood estimate $\hat{p}(z_0)$ (i.e. supposing the model is generically identifiable, see Proposition \ref{prop-MLEequiv} below), we define an \emph{$\epsilon$-confidence region ${U}_\epsilon(z_0)$} as follows:
\begin{align*}
U_{\epsilon}(z_0) = \{p \in P \mid - \log \psi(p,z_0) < \epsilon \}.
\end{align*}
The $\epsilon$-confidence region therefore denotes the set of parameters that fit the data at least as well as some cutoff quality of fit, predicated on $\epsilon$. The set $U_{\epsilon}(z_0)$ is often known as a Likelihood-based confidence region \cite{Vajda1989, gc-rlb:statsBook}, and is intimately connected with the Likelihood Ratio Test: Suppose we had a null hypothesis $\mathcal{H}_0$ that data $z_0$ was generated (modulo noise) through a parameter $p_0$, and we wished to test the alternative hypothesis $\mathcal{H}_1$ that $z_0$ was generated through some other parameter. By definition, a Likelihood Ratio test would reject the null hypothesis when
\begin{align*}
\Lambda(p_0,z_0):= \frac{\psi(p_0,z_0)}{\psi(\hat{p}(z_0),z_0)} \leq k^*,
\end{align*}
where $k^*$ is a critical value, with the significance level $\alpha$ equal to the probability $\text{Pr}(\Lambda(z_0)\leq k^* | \mathcal{H}_0)$ of rejecting the null hypothesis when it is in fact true. The set of parameters such that the nul hypothesis is not rejected at significance level $\alpha$ is
\[ \{ p' \in P \mid \\log\psi(p',z_0)<-\log-\psi(\hat{p}(z_0),z_0)-\log k^*\},\]
that is, $U_\epsilon(z_0)$, where $\epsilon=-\log-\psi(\hat{p}(z_0),z_0)-\log k^*$.

\begin{prop}[{closely related to \cite[Theorem 2]{eac-bjtm:ParameterRedundency}}]\label{prop-MLEequiv}
Let $(M,\phi,\psi)$ be a mathematical model with a model prediction map, and a specific assumption on measurement noise. Suppose that maximum likelihood estimates exist for generic data. If $\phi$ and $\psi$ are real-analytic, then for almost all $z_0 \in Z$, the set of maximum likelihood estimates $\hat{p}(z)$, consists of exactly one equivalence class of $\sim\!\!_{M,\phi}$.
\end{prop}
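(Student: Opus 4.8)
The plan is to reduce the claim to the generic uniqueness of the best‑fitting model prediction, and then to obtain that uniqueness from an almost‑everywhere differentiability argument for the optimal negative log‑likelihood.

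\emph{Reduction.} Since the noise density depends on $p$ only through $\phi(p)$, write $\psi(p,z)=\psi_0(\phi(p),z)$ on $W\times Z$ with $W:=\phi(P)\subseteq\RR^N$. Then $-\log\psi(\cdot,z)$ is constant on each fibre of $\phi$, and the fibres of $\phi$ are exactly the equivalence classes of $\sim\!\!_{M,\phi}$. Hence $\hat p(z)$ is always a union of equivalence classes, and under the bijection $\overline\phi\colon P/\!\!\sim\!\!_{M,\phi}\xrightarrow{\ \sim\ } W$ it corresponds to $\operatorname{Argmax}_{w\in W}\psi_0(w,z)$. So it suffices to show that for almost all $z$ this $\operatorname{Argmax}$ is a single point; the reduction itself is in the spirit of \cite[Theorem~2]{eac-bjtm:ParameterRedundency}.

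\emph{Non‑degeneracy.} If $w_1=\phi(p_1)$ and $w_2=\phi(p_2)$ are distinct, then $d(p_1,p_2)>0$ by the characterization of the vanishing of the Kullback--Leibler premetric (Gibbs' inequality, Section~\ref{Section-Sloppiness}), so $\psi_0(w_1,\cdot)$ and $\psi_0(w_2,\cdot)$ differ on a set of positive measure. Thus the real‑analytic function $z\mapsto\psi(p_1,z)-\psi(p_2,z)$ is not identically zero, and $\{z:\psi(p_1,z)=\psi(p_2,z)\}$ is a proper real‑analytic subset of $Z$, hence Lebesgue‑null. When $W$ is countable --- for example for finite discrete statistical models, where $W$ is finite --- the conclusion follows at once by taking the union of these null sets over all pairs. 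The substance is the general case, where a continuum of pairs must be handled at once.

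\emph{General case and the main obstacle.} Set $\ell(z):=\inf_{p\in P}\bigl(-\log\psi(p,z)\bigr)$; by hypothesis this is finite and attained on a set $Z^{\ast}$ of full measure. I would first show $\ell$ is locally Lipschitz on $Z^{\ast}$: the bound $\ell(z)\le-\log\psi(p_0,z)$ from any fixed minimizer $p_0$ is locally Lipschitz from above, and local properness of the optimization (minimizers staying in a fixed compact set near a given point) supplies the lower bound; by Rademacher's theorem $\ell$ is then differentiable off a null set. At a point $z_0$ of differentiability of $\ell$ where the infimum is attained at two predictions $w_1\ne w_2$, the real‑analytic functions $-\log\psi_0(w_1,\cdot)$ and $-\log\psi_0(w_2,\cdot)$ both touch $\ell$ from above at $z_0$, which forces $\psi_0(w_1,z_0)=\psi_0(w_2,z_0)$ and $\nabla_z\log\psi_0(w_1,z_0)=\nabla_z\log\psi_0(w_2,z_0)=-\nabla\ell(z_0)$. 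In the additive Gaussian case of Example~\ref{eg-AdditiveGaussianNoise1} the map $w\mapsto\nabla_z\log\psi_0(w,z_0)$ is injective (affine, with invertible linear part $K\Sigma^{-1}$), so $w_1=w_2$ and the proof is complete --- this recovers the classical fact that metric projection onto a closed set is single‑valued almost everywhere. The genuinely hard point is the fully general real‑analytic statement: one must still rule out, for almost all $z_0$, a tied pair $w_1\ne w_2$ obeying the above $N+1$ real‑analytic relations, which seems to require either that the noise model make ties non‑generic (as Gaussian noise does --- without some such condition the statement can fail, e.g.\ $\phi=\mathrm{id}$ with a bimodal real‑analytic density), or a dimension count on the (sub)analytic solution set in $(p_1,p_2,z)$‑space with control of its projection to $Z$. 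In short, the crux is upgrading ``each fixed pair of predictions gives a null tie‑set'' to ``the set of data with a tied best fit is null'' --- a union over a continuum beyond countable subadditivity --- and passing through $\ell$ and Rademacher, rather than enumerating pairs, is what makes this tractable; real‑analyticity of $\phi$ and $\psi$ is what pushes the argument past the Euclidean/Gaussian case.
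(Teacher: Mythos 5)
Your opening reduction --- that the negative log-likelihood is constant on the fibres of $\phi$, so the question is whether the best-fitting prediction $w\in\phi(P)$ is unique for almost all $z$ --- is exactly the paper's first step. From there the two arguments diverge. The paper's entire treatment of the hard step is the unproved assertion that the set of data points where the closest model prediction is not unique ``will be contained in the zero set of analytic functions''; it never addresses the point you correctly isolate, namely that knowing each \emph{fixed} pair of distinct predictions has a Lebesgue-null tie-set does not control a union over a continuum of pairs. Your route through the value function $\ell$, Rademacher's theorem, and the injectivity of the score map $w\mapsto\nabla_z\log\psi_0(w,z_0)$ closes that gap rigorously in the additive Gaussian case (and whenever the score map is injective in $w$), which is strictly more than the paper's two-line proof actually establishes.

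Your parenthetical objection is also genuine and should not be brushed aside: take $\phi=\mathrm{id}$ and $\psi_0(w,z)$ an equal-weight mixture of two unit-variance Gaussians centred at $w-2$ and $w+2$. Every hypothesis of the proposition holds ($\phi$ and $\psi$ real-analytic, maximum likelihood estimates exist for all data, $\psi$ depends on $p$ only through $\phi(p)$), yet by symmetry every $z_0$ admits exactly two maximum likelihood estimates lying in distinct equivalence classes, so the conclusion fails on a set of full measure. The proposition therefore needs an additional hypothesis on the noise model --- injectivity of the score map in $w$, or log-concavity of $\psi_0$ in the prediction argument --- which the Gaussian setting of the sloppiness literature supplies tacitly. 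In short: your argument is incomplete in the generality claimed, but so is the paper's; yours proves more where it works and correctly diagnoses why the statement cannot be true exactly as written.
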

\begin{proof}
Let $z_0\in Z$ be a generic data point. Solving the likelihood equation corresponds to finding the model prediction ``closest'' to the noisy data, as measured via the negative log-likelihood. We can assume without loss of generality that there is a unique solution to the likelihood equations. Indeed, under our assumptions, the set of data points where the closest model prediction is not unique will be contained in the zero set of analytic functions. Thus, the set of maximum likelihood estimates will consist of a single equivalence class. We can further assume that this equivalence class has generic size.
\end{proof}

\begin{rmk} 
The ML degree \cite{sh-ak-bs:sle}, where the acronym ``ML'' stand for maximum likelihood, is defined as the number of complex solutions to the likelihood equations (for generic data). The ML degree is an upper bound for the number of solutions for the maximum likelihood equation, in particular it is an upper bound on the size of the equivalence classes when maximum likelihood estimates exist.
\end{rmk}

Even if for generically identifiable models the maximum likelihood estimate is unique with probability one, the parameter may not be identifiable in practice, meaning that noisy data does not constrain the parameter value to a bounded region of parameter space for a significant portion of the data space. More precisely, we refine the definition of Raue et al \cite{Raue2009}:

\begin{defn}[Practical identifiability]
Let $(M,\phi,\psi,d_P)$ be a mathematical model with a model prediction map, a specific assumption on measurement noise and a choice of reference metric $d_P$ on $P$. Suppose that maximum likelihood estimates exist for generic data. Then $(M,\phi,\psi,d_P)$ is \emph{practically identifiable at significance level $\alpha$} if and only if for generic $z_0\in Z$, there is a unique maximum likelihood estimate and the confidence region $U_{\epsilon}(z_0)$ is bounded with respect to the reference metric $d_P$, where $\epsilon$ satisfies
\begin{align*}
p'\in U_{\epsilon}(z_0) \Leftrightarrow \text{Pr}\Big(-\log \psi(p',\hat{z})<\epsilon ~\Big{ | }~ \hat{z}\in Z \text{ is a corruption of }\phi(\hat{p}(z_0)\Big)=1-\alpha.
\end{align*}
The model $M$ is \emph{practically unidentifiable at significance level $\alpha$} if and only if there is a positive measure subset $Z'\subset Z$ such that for $z_0\in Z'$, the confidence interval $U_{\epsilon}(z_0)$ is unbounded with respect to the reference metric $d_P$ on $P$.
\end{defn}

A model is practically identifiable at significance level $\alpha$ if generic data imposes that the parameter estimate belongs to a bounded region of parameter space, but this confidence region could be very large. Hence practical identifiability in this sense may not necessarily be completely satisfactory to the practitioner. One can further quantify practical identifiability to take into account the size of confidence regions, see for example \cite{dvr-ja-ap:SloppinessDhruva}.

Sloppiness and practical identifiability are complementary concepts. Practically identifiable models can be very sloppy, for example if the estimation of one component of the parameter is much more precise than that of another, see example below. 

\begin{eg}[Practically identifiable, but sloppy]
Models with linear model prediction maps, Euclidean parameter space and standard additive Gaussian noise are always practically identifiable according to our definition, but these models can be arbitrarily sloppy. 

We consider a model with 2-dimensional Euclidean parameter space and a linear model prediction map $\phi$ given by $(a,b)\mapsto (10^Na,b)$. We assume further that the measurement noise is Gaussian with identity covariance matrix. By Proposition \ref{prop:LinearPredictions}, $d=d_{FIM,p_0}$ for any $p_0$ and at any scale, the level curves of $d$ are ellipses with aspect ratio $10^N$. 

Our assumption of additive Gaussian noise implies that for any $z_0$, for each $\epsilon>0$, the confidence interval $U_\epsilon(z_0)$ is an oval whose boundary ellipse is the level set of $d$ centered at the maximum likelihood estimate $\hat{p}(z_0)$. Thus the confidence intervals $U_\epsilon(z_0)$ are bounded for any $\epsilon>0$, and so the model is practically identifiable.
\done
\end{eg}
 
 In the following, we give an example of a model that is almost everywhere not sloppy at the infinitesimal scale, but is not practically identifiable. This model, however, exhibits some sloppiness at the non-infinitesimal scale. We see that the boundedness of level curves of $d$ almost every where does not imply the boundedness of confidence intervals almost everywhere.

 \begin{eg}[Not sloppy at the infinitesimal scale, but not practically identifiable]\label{eg-NotSloppyNotPI}
 Consider the mathematical model $(M,\phi,\mathcal{N}(\phi(p), I_2),d_2)$ given by 
 \begin{alignat*}{3}
 \phi :& [1/2,\infty)\times \RR & \to& \RR^2\\
         & (a,b)                          & \mapsto & \left(\frac{a}{a^2+b^2}, -\frac{b}{a^2+b^2}\right),
 \end{alignat*}
 additive Gaussian noise with identity covariance matrix, and parameter space $P=[1/2,\infty)\times \RR$ equipped with the usual Euclidean metric.
 
 The model prediction map $\phi$ is a conformal mapping  that maps the closed half plane $[1/2,\infty)\times \RR$ to the closed disc of radius 1 centered at $(1,0)$ minus the origin. Since it is a conformal mapping, it preserves angles, and so infinitesimal circles are sent to infinitesimal circles. Under our assumptions on measurement noise and with the standard Euclidean metric as a reference metric on $P$, the model is not sloppy at all  at the infinitesimal scale at parameters belonging to the open half plane $(1/2,\infty)\times \RR$, but becomes increasingly sloppy at larger and larger scale, especially away from the parameter $(1/2,0)$. The injectivity of the map $\phi$ on $P=[1/2,\infty)\times \RR$ implies that the model $(M,\phi)$ is globally identifiable. 
 
Our assumption on measurement noise implies that the maximum likelihood estimate is the parameter whose image is closest to the data point $z_0$, it will exist for any data point outside the closed half line $(-\infty,0]\times \{0\}$. The confidence region $U_\epsilon(z_0)$ is then the preimage of the Euclidean open disc of radius $\epsilon$ centered at $z_0$. Whenever the closure of this open disc contains the origin, the corresponding confidence region will be unbounded. 
 \done
 \end{eg}
 
The final example illustrates that the uniqueness of the maximum likelihood estimate is independent from the boundedness of the confidence regions:
 
\begin{eg}[Bounded confidence regions but not practically identifiable]
 Consider the mathematical model $(M,\phi,\mathcal{N}(\phi(p), I_2),d_2)$ with model prediction map
 \begin{alignat*}{3}
 \phi :& [1/2,\infty)\times \RR & \to& \RR\\
         & (a,b)                          & \mapsto & a^2+b^2,
 \end{alignat*}
 additive Gaussian noise with identity covariance matrix, and parameter space $P=[1/2,\infty)\times \RR$ equipped with the usual Euclidean metric. The equivalence class of the model-data equivalence relation are the concentric circles $\{(a,b)\mid a^2+b^2=r\}$ for $r\geq0$, and so the model is generically non-identifiable. By Proposition \ref{prop-MLEequiv}, the set of maximum likelihood estimates for a generic $(a_0,b_0)\in \RR$ is also a circle centered at the origin and the model is practically non-identifiable on any open neighborhood of $(a_0,b_0)$. On the other hand, as the measurement noise is assumed to be Gaussian and the equivalence classes of $\sim\!\!_{M,\phi}$ are bounded, any confidence region will be bounded as well. Indeed, the confidence region $U_{\epsilon}((a_0,b_0))$ will be either an open disk $\{(a,b)\in \RR^2 \mid a^2+b^2<a_0^1+b_0^2+\epsilon\}$, when $\epsilon>a_0^1+b_0^2$, or an open ring $\{(a,b)\in \RR^2 \mid a_0^1+b_0^2-\epsilon<a^2+b^2<a_0^1+b_0^2+\epsilon\}$, otherwise.\done
\end{eg}


\section{Future of sloppiness}

There are a number of interesting future directions for the theory and application of sloppiness. While we explained sloppiness via identifiability, this is only the beginning. An important next step is understanding sloppiness in the context of existing inference and uncertainty quantification theory. In terms of applications, there are some models where the reference metric on parameter space is non-Euclidean and we believe the computation of multiscale sloppiness can be adapted. While beyond the expertise of the authors, we would be excited to learn how the presented geometry of sloppiness extends to stochastic differential equations. 

We highlighted how sloppiness is a local property, dependent on the parameter and timepoints of experiment. This dependence is reflected in model selection studies where a different model is selected depending on the choice of timepoints  \cite{Silk:2014} or experimental stimulus dose \cite{Gross:2016}. We believe quantifying the shape of $\delta$-sloppiness in relation to identifiability will have direct impact on parameter estimation. 


\section{Acknowledgements}
HAH gratefully acknowledges the late Jaroslav Stark for posing this problem. The authors thank Carlos Am\'endola, Murad Banaji, Mariano Beguerisse D\'iaz, Sam Cohen, Ian Dryden, Paul Kirk, Terry Lyons, Chris Meyers, Jim Sethna, Eduardo Sontag, Bernd Sturmfels, and Jared Tanner for fruitful discussions. Additionally, we thank the anonymous referees for their helpful comments. This paper arises from research done while ED was a postdoctoral research assistant at the Mathematical Institute in Oxford funded by the John Fell Oxford University Press (OUP) Research Fund, and DVR was supported by the EPSRC Systems Biology Doctoral Training Center. ED is now supported by an Anne McLaren Fellowship from the University of Nottingham. HAH and DVR began discussions at the 2014 Workshop on Model Identification funded by KAUST KUK-C1-013-04.  HAH was supported by EPSRC Fellowship EP/K041096/1 and now a Royal Society University Research Fellowship.


\bibliographystyle{plainurl}
\bibliography{SloppinessReferences}

\appendix


\section{Table summary of main examples}

\noindent
\begin{tabular}{| p{4.2cm} | p{7.7cm} |}
\hline
Example                                &  Fitting points to a line \ref{eg-Line1} \ref{eg-Line2}\ref{eg-Line3}\ref{eg-Line4} \\
\hline
Type                                       & Explicit time-dependent model\\
\hline
Parameter space $P$            &  $(a_0,a_1)\in \RR^2$\\
\hline
Variable ($x\in X$)                  &$x(t)\in \RR$ for $t\in \RR_{\geq 0}$\\
\hline
Measurable output ($y\in Y$) &  $x(t)$\\
\hline
Perfect data                            &  $(x(t_1),\ldots,x(t_N))$ for some $t_1<\cdots<t_N\in\RR_{\geq 0}$\\
\hline
Noisy data                               & $(x(t_1)+\epsilon_1,\ldots,x(t_N)+\epsilon_N)$\\
\hline
\end{tabular}

~\\

\noindent
\begin{tabular}{| p{4.2cm} | p{7.7cm} |}

\hline
Example                                & Two biased coins \ref{eg-2BiasedCoins1}, \ref{eg-2BiasedCoins1} \\
\hline
Type                                       & Finite discrete statistical  model\\
\hline
Parameter space $P$            & $[0,1]^3$ \\
\hline
Variable ($x\in X$)                  &Outcome of 1 instance of the experiment     \\
\hline
Measurable output ($y\in Y$) & Record of 1 instance of the experiment  \\
\hline
Perfect data                            & Probability distribution for $p\in P$ \\
\hline
Noisy data                               & Record of $N$ instances of the experiment\\
\hline

\end{tabular}

~\\

\noindent
\begin{tabular}{| p{4.2cm} | p{7.7cm} |}

\hline
Example                                & Sum of exponentials \ref{eg-SumExponentials1}, \ref{eg-SumExponentials2}, \ref{eg-SumExponentials3}, \ref{eg:SumExpInfDelta}\\
\hline
Type                                       & Explicit time dependant model\\
\hline
Parameter space $P$            & $(a,b)\in \RR_{\geq 0}^2$ \\
\hline
Variable ($x\in X$)                  & $x(t)\in \RR_{>0}$ for $t\in\RR_{\geq0}$ \\
\hline
Measurable output ($y\in Y$) &  $x(t)\in \RR_{>0}$ for $t\in\RR_{\geq0}$\\
\hline
Perfect data                            &  $(x(t_1),\ldots,x(t_N))$ for some $t_1<\cdots<t_N\in\RR_{\geq 0}$\\
\hline
Noisy data                               & $(x(t_1)+\epsilon_1,\ldots,x(t_N)+\epsilon_N)$\\
\hline

\end{tabular}

~\\

\noindent
\begin{tabular}{| p{4.2cm} | p{7.7cm} |}

\hline
Example                                & An ODE model with an exact solution \ref{eg-ODESoln1}\\
\hline
Type                                       & Polynomial ODE model\\
\hline
Parameter space $P$            & $(p_1,p_2)\in \RR^2_{>0}$ \\
\hline
Variable ($x\in X$)                  &  $(x_1(t),x_2(t))\in \RR^2_{>0}$ for $t\in \RR_{\geq 0}$\\
\hline
Measurable output ($y\in Y$) & $(x_1(t), x_2(t))$ \\
\hline
Perfect data                            &  $(x_1(t_1), x_2(t_1),\ldots,_1(t_N), x_2(t_N) )$ for some $t_1<\cdots<t_N\in\RR_{\geq 0}$\\
\hline
Noisy data                               & $(x_1(t_1)+\epsilon_{1,1}, x_2(t_1)+\epsilon_{2,1},\ldots,_1(t_N)+\epsilon_{1,N}, x_2(t_N)+\epsilon_{2,N} )$ \\
\hline

\end{tabular}

~\\

\noindent
\begin{tabular}{| p{4.2cm} | p{7.7cm} |}

\hline
Example                                & A non-linear ODE model \ref{eg-NonlinODE}\\
\hline
Type                                       & Polynomial ODE model \\
\hline
Parameter space $P$            &  $(p_1,p_2,p_3,p_4,p_5)\in P\subseteq \RR^5$\\
\hline
Variable ($x\in X$)                  &  $(x_1(t),x_2(t))\in \RR^2_{>0}$ for $t\in \RR_{\geq 0}$\\
\hline
Measurable output ($y\in Y$) &  $(x_1(t),x_2(t))\in \RR^2_{>0}$ for $t\in \RR_{\geq 0}$\\
\hline
Perfect data                            & \textbullet~ $(x_1(t_1), x_2(t_1),\ldots,_1(t_N), x_2(t_N) )$ for some $t_1<\cdots<t_N\in\RR_{\geq 0}$\\
                                                & \textbullet~  An exhaustive summary or input-output equations\\
\hline
Noisy data                               & $(x_1(t_1)+\epsilon_{1,1}, x_2(t_1)+\epsilon_{2,1},\ldots,_1(t_N)+\epsilon_{1,N}, x_2(t_N)+\epsilon_{2,N} )$\\
\hline

\end{tabular}

~\\

\noindent
\begin{tabular}{|p{4.2cm}|p{7.7cm}|}

\hline
Example                                & Gaussian mixtures \ref{eg-GaussianMixtures1}\\
\hline
Type                                       & Continuous parametric statistical model \\
\hline
Parameter space $P$            & $(\lambda, \mu,\sigma,\nu,\tau)\in [0,1]\times \RR\times \RR_{\geq 0}\times \RR\times \RR_{\geq 0}$ \\
\hline
Variable ($x\in X$)                  &  a characteristic $x\in \RR_{\geq 0}$ of a mixed polulation\\
\hline
Measurable output ($y\in Y$) & $x\in \RR_{\geq 0}$ \\
\hline
Perfect data                            &  \textbullet~  Probability distribution of  $x$ for some $(\lambda, \mu,\sigma,\nu,\tau)$\\
                                               
                                                  & \textbullet~ Value of the cdf (or the pdf) at general $x_1,\ldots,x_11\in\RR$\\
                                                 
                                                  & \textbullet~ Value of the moment generating function at general  $t_1,\ldots,t_11\in(-a,a)$\\
                                           
                                                 & \textbullet~ 11 generic moments (or the first 7)\\
                                                 \hline

Noisy data                               &\textbullet~ 
                                                  Measurements from a finite sample with some measurement error\\
                                               
                                                 & \textbullet~  Empirical distribution function from a finite sample (or its numerical derivative)\\
                                                  & \textbullet~  11 generic sample moments from a finite sample (or the first 7)\\
                                          
\hline     
\end{tabular}

~\\

\noindent
\begin{tabular}{| p{4.2cm} | p{7.7cm} |}

\hline
Example                                & Linear parameter-varying model \ref{eg-linparvar} \\
\hline
Type                                       & ODE model \\
\hline
Parameter space $P$            &  $p\in \RR^{r-1}$\\
\hline
Variable ($x\in X$)                  &  $x(t)\in\RR^m$ for $t\in \RR_{\geq 0}$ \\
\hline
Measurable output ($y\in Y$) &  $y=Cx(t)$ for $t\in \RR_{\geq 0}$\\
\hline
Perfect data                            &  $(y(t_1),\ldots,y(t_N))$ for some $t_1<\cdots<t_N\in\RR_{\geq 0}$ \\
\hline
Noisy data                               & $(y(t_1)+\epsilon_1,\ldots,y(t_N)+\epsilon_N)$ \\
\hline

\end{tabular}

\vspace{1cm}


\end{document}